\newtheorem{theorem}{Theorem}[section]
\newtheorem{lemma}[theorem]{Lemma}
\newtheorem{proposition}[theorem]{Proposition}
\newtheorem{definition}[theorem]{Definition}
\def\R{{ \mathbb{R}}}
\newcommand{\RE} {{\rm I \kern-2.8pt R} }
\newcommand{\beasnum}{\begin{eqnarray}}
\newcommand{\eeasnum}{\end{eqnarray}}
\newcommand{\beas}{\begin{eqnarray*}}
\newcommand{\eeas}{\end{eqnarray*}}
\begin{document}

\begin{center}
{\Large {\bf Evolutionary Game Theory on Measure Spaces: Well-Posedness}} {\large
\textbf{\bigskip}}

\vspace{0.1in}

John Cleveland$^{\dag}$ and Azmy S. Ackleh$^{\ddag}$

\vspace{0.1in}

$^\dag$Department of Mathematics\\
Penn State University\\
University Park, State College, PA 16802\\

\vspace{0.1in}

$^\ddag$Department of Mathematics\\
University of Louisiana at Lafayette\\
Lafayette, Louisiana 70504-1010\\

\vspace{0.1in}
\end{center}

\baselineskip = 18 pt

\begin{abstract}
An attempt is made to find a comprehensive mathematical framework in
which to investigate the problems of well-posedness and asymptotic
analysis for fully nonlinear evolutionary
game theoretic models. The model should be rich enough to include all classical nonlinearities, e.g.,  Beverton-Holt or Ricker type. For several such models formulated on the
space of integrable functions, it is known that as the variance of
the payoff kernel becomes small the solution converges in the long
term to a Dirac measure centered at the fittest strategy; thus the
limit of the solution is not in the state space of integrable
functions. Starting with the replicator-mutator equation and a
generalized logistic equation as bases, a general model is formulated as a
dynamical system on the state space of finite signed measures.
Well-posedness is established, and then it is shown that by choosing
appropriate payoff kernels this model includes all classical density
models, both selection and mutation, and discrete and continuous strategy (trait) spaces.  \\

\noindent {\bf Key Words:} Evolutionary game models, selection-mutation, space of finite
signed measure, well-posedness, continuous dependence.\\

\noindent {\bf AMS Subject Classification:} 91A22, 34G20, 37C25,
92D25.
\end{abstract}

\section{ Introduction}

Evolutionary game theory (EGT) is the creation and study of  mathematical models that describe how the strategy profile
in games change over time due to mutation and selection
(replication). In this paper we address the
problem of finding a comprehensive mathematical framework suitable
for studying the problems of well-posedness and long-term solution behavior for fully nonlinear
evolutionary game theoretic models. We form a unified theory for
evolutionary game theory as a dynamical system on the state space of
finite signed Borel measures under the weak star topology.
 In this theory, we unify the discrete and continuous strategy (trait) spaces
and the pure replicator and replicator-mutator dynamics under one model.

 A natural question to ask is why the formulation of a dynamical system on the state space of finite signed Borel measures under the weak star topology? Why isn't the existing mathematical machinery adequate? The next two examples will illustrate the need for such a formulation. First, we consider the following EGT model of generalized logistic growth with pure selection (i.e., strategies
replicate themselves exactly and no mutation occurs)  which was developed and
analyzed in \cite{AMFH}:
\begin{equation}
 \frac{d}{dt} x(t,q) = x(t,q) (
q_1 -q_2 X(t)), \label{logiseq}\end{equation}
 where $X(t) = \int_Q x(t,q) dq$ is the total population, $Q \subset \text{int}(\mathbb{R}_+^2)$ is compact
 and the state space is the set of continuous real valued functions
 $C(Q)$. Each $ q=(q_1, q_2) \in Q$ is a two tuple where $q_1$ is an
 intrinsic replication rate and $q_2$ is an intrinsic mortality
 rate. The solution to this model converges to a Dirac
 mass centered at the fittest $q$-class. This is the class with the highest birth to death ratio
 $\frac{q_1}{q_2}$,
 and this convergence is in a topology called $weak^* $
  (point wise convergence of functions) \cite{AMFH}. However, this Dirac limit is not in the
  state space as it is not a continuous function. It is a measure.  Thus, under this formulation one cannot treat this Dirac mass
  as an equilibrium (a constant) solution and hence the study of linear stability analysis is not possible. Other examples
  for models developed on classical state spaces such as $L^1(X,\mu)$ that demonstrate the emergence of Dirac measures in the asymptotic limit from smooth initial densities are given in \cite{AFT,AMFH,calsina,CALCAD,GVA,GR1,P,GR2}.
  In particular, how the measures arise naturally in a biological and adaptive dynamics environment is illustrated quite well in \cite[chpt.2]{P}.
   These examples show that the chosen state space for formulating such selection-mutation models must \textbf{contain} densities and Dirac masses and the topology used must \textbf{contain the ability to demonstrate convergence} of densities to Dirac masses.

The first example above assumes a continuous strategy space $Q$ and hence the model solution is sought among density functions denoted by $x(t,q)$. Our second example, is the classic discrete EGT model known as
the replicator-mutator equation (in this model the strategy space is assumed to be discrete). In \cite[pg. 273]{Nowak} it is
given as:
\begin{equation}\label{RM}
 \dot{x_i}= \sum _{j=1}^n x_jf_j(\overrightarrow{x})Q_{ij}
-\phi(\overrightarrow{x})x_i  \end{equation} where $\vec{x} =( x_1,
x_2, ..., x_n) $ is a vector consisting of $n$ classes each of size
$x_i$, and $Q_{ij}$ is the payoff kernel, i.e., $Q_{ij}$ is the
proportion of the $j$-class that mutates into the $i$-class. Lastly
$\phi =\sum_{j=1}^{n} f_jx_j $ is a weighted (average) fitness. The
author states that the language equation (replicator-mutator
equation) is a unifying description of deterministic evolutionary
dynamics. He further states that the replicator-mutator equation is
used to describe the dynamics of complex adaptive systems in
population dynamics, biochemistry and models of language
acquisition.

  Under the new formulation  on the space of measures we present here, the above examples are special cases of a more general measure-valued model.
  In particular, with the discrete model if we allow the fitness functions $f_j$ to be density dependent then this model can be obtained by choosing the proper initial condition composed of a linear combination
  of Dirac masses and the proper replication-mutation kernel which is also composed of a linear combination of Dirac
  masses. The example of the pure selection density model given in \eqref{logiseq} can be realized from the measure-valued model by choosing an
  absolutely continuous initial measure and a continuous family of
  Dirac measures for the selection-mutation kernel (which represents the pure replication case).
  Thus, these density and discrete models can be unified under this formulation. Furthermore, our new theory combines
  both the pure replicator and replicator-mutator dynamics in a continuous manner. By this we mean that as the mutations get
  smaller and smaller the replicator-mutator model will approach the pure replicator
  model. This is possible because our mutation kernels are allowed
  to be (family of) measures as well. This presents a serious difficulty in the
  analysis which requires the development of some technical tools in studying the
  well-posedness of the new model.

 Many researchers have recently devoted their attention to
the study of such EGT models (e.g.
\cite{AFT,AMFH,calsina,CALCAD,Hof,Hof2, MagWeb,SES}). To date almost
all EGT models are formulated as {\it density} models
\cite{AMFH,calsina,CALCAD, MagWeb,SES} with {\it linear} mutation
term. There are several formulations of pure selection or replicator
equation dynamics on measure spaces \cite{AFT,Bomze,CressHof}. The
recent formulations of selection-mutation balance equations on the
probability measures by \cite{EMP,KKO} are novel constructions.
These models describe the aging of an infinite population as a
process of accumulation of mutations in a genotype. The dynamical
equation which describes the system is of Kimura-Maruyama type. Thus
far in selection-mutation studies the mutation process has been
modeled using two different approaches: (1) A diffusion type
operator \cite{GVA,SES}; (2) An integral type operator that makes use of
a mutation kernel \cite{AFT,calsina,CALCAD,EMP, KKO}. Here we focus
on the second approach for modeling mutation.

Perhaps the work most related to the one presented here is that in \cite{AFT}. In that paper,
the authors considered a {\it pure} selection model with density dependent birth and mortality
function and a 2-dimensional trait space on the space of finite signed measures. They discussed
 existence-uniqueness of solutions and studied the long term behavior of the model.
Here, we generalize the results in that paper in several directions.
Most salient is the fact that the present paper is one in evolutionary game
theory, hence the applications are possibly other than population
biology. In particular, in the present paper we construct a (measure valued) EGT model. This is
an ordered triple $(Q,\mu,F)$ subject to:
\begin{equation}\label{mconstraint}\frac{d}{dt}\mu(t)(E)=F(\mu(t)(Q))(E), \text{ for every}
~~E \in \mathcal{B}(Q). \end{equation} Here $Q$ is the strategy
(metric) space, $\mathcal{B}(Q)$ are the Borel sets on $Q$, $\mu(t)$
is a time dependent family of finite signed Borel measures on $Q$
and $F$ is a density dependent vector field such that $\mu$ and $F$
satisfy equation \eqref{mconstraint}. The main contributions of the present work
are as follows:  (1) we establish well-posedness of the new
measure-valued dynamical system; (2) we are able to combine models
that consider both discrete and continuous parameter spaces under
this formulation; no separate machinery is needed for each; (3) we
are able to include both selection and mutation in one model because
our setup allows for choosing the mutation to be a family of
measures; (4) unlike the linear mutation term commonly used in the literature, we allow for nonlinear (density dependent) mutation
term that contain all classical nonlinearities, e.g., Ricker,
Beverton-Holt, Logistic; (5) unlike the one or two dimensional strategy spaces used in the literature,  we allow for
a strategy space $Q$ that is possibly infinite dimensional. In particular, we assume that $Q$ is a compact complete separable metric space, i.e., a compact Polish space;

This paper is organized as follows. In section 2 we demonstrate how
to proceed from a density model to a measure valued one and we
formulate the model on the (natural) space of measures. In section 3
we establish the well-posedness of this model. In section 4 we
demonstrate how this model encompasses the discrete, continuous
replicator-mutator and species and quasi-species models.  In section 5 we provide concluding
remarks.

 \section{ \bf From Densities to Measures}
We begin by giving a definition of a dynamical system that will be
used throughout this paper.
\begin{definition} If $\mathfrak{T}$, $\Gamma$ are topological spaces, then a dynamical system on $\mathfrak{T}$ is the tuple
 $( \mathfrak{T}, \Gamma, \varphi)$ where, $ \varphi: {\mathbb {R}_+}\times \mathfrak{T} \times \Gamma \to \mathfrak{T}$
is such that the following hold:
\end{definition}

\begin{description}
\item i.  For all $(u, \gamma) \in \mathfrak{T}\times \Gamma$, $ \varphi(\cdot;u, \gamma)$ is continuous.
\item ii. For all $(u, \gamma) \in \mathfrak{T} \times \Gamma$,  $ \varphi( 0; u, \gamma) =u$.
\item iii. For all $ \theta_1, \theta_2, u, \gamma $, $\varphi(\theta_1+ \theta_2; u, \gamma) =
\varphi(\theta_2;\varphi(\theta_1, u, \gamma) , \gamma).$
\item iv. If $\varphi$ is a continuous mapping then $\varphi$ is called a
continuous dynamical system.
\end{description}
There is a natural equivalence between dynamical systems and initial
value problems. Given an initial value problem (IVP), the solution
as a function of the parameter, initial condition and starting time
generate a dynamical system \cite{ChellHadd}. Our dynamical system
will be the one resulting from the solution of an IVP. To this end
our initial modeling point is to take as the strategy space $Q$ a compact subset of ${\rm int}(\mathbb{R}^n_{+})$ (the interior of the positive cone of
$\mathbb{R}^n$).  and to consider the following density IVP:
\begin{equation} \left\{
\begin{array}{l}
 \frac{d}{dt}x(t,q) =
\underbrace{\int_Q f_1(X(t),\hat q) p(q,\hat q) x(t,\hat q) d \hat
q}_{\mbox{Birth term}} - \underbrace{f_2(X(t),q))
x(t,q)}_{\mbox{Mortality term}}\\
 x(0,q)= x_{0}.
 \label{dsmm} \end{array}
\right.\end{equation} Here, $X(t) = \int_Q
x(t,q) dq$ is the total population, $f_1(X,\hat q)$ represents the
density-dependent replication rate per $\hat  q $ individual, while $f_2(X,q)$ represents the density-dependent mortality rate per $q$ individual.
 The probability density function $p(q,\hat q)$ is the
selection-mutation kernel. That is, $p(q, \hat q) d q$ represents
the probability that an individual of type $\hat q$ replicates an
individual of type $q$ or the proportion of $\hat q$'s offspring
that belong to the $dq$ ball. Hence, $f_1(X(t),\hat q)p(q, \hat q)
dq$ is the offspring of $\hat q$ in the $dq$ ball and $
f_1(X(t),\hat q) p(q,\hat q)dq x(t,\hat q) d \hat q$ is the total
replication of the $d\hat q$ ball into the $dq$ ball. Summing
(integrating) over all $ d\hat q$ balls results in the replication
term. Clearly $f_2(X(t),q) x(t,q)dq$ represents the mortality in the
$dq$ ball. The difference between birth and death in the $dq$ ball
gives the net rate of change of the individuals in the $dq$ ball,
i.e., $\frac{d}{dt}x(t,q)dq .$ Dividing by $dq$ we get \eqref{dsmm}.

We  point out that \textbf{formally}, if we let $p(q,\hat q)
=\delta_{\hat q}(q)=\delta_{q}(\hat q)$ (the delta function is even)
in \eqref{dsmm} then we obtain the following pure selection
(density) model
\begin{equation} \left \{ \begin{array}{l}
\displaystyle \frac{d}{dt}x(t,q) =   x(t,q) (f_1(X(t),q) -
f_2(X(t),q))\\
 x(0,q)= x_{0},
 \end{array} \right .
 \label{pureselection}
\end{equation}
of which equation (1) in \cite{AFT} is a special case. Indeed if
$p(q,\hat q)dq = dq\delta_{\hat q} ( q)$ then this means that the
proportion of $\hat q$'s offspring in the $dq$ ball is zero unless
$q= \hat q$ in which case this proportion is $dq,$ i.e., individuals
of type $\hat q$ only give birth to individuals of type $\hat q$.

Integrating both sides of \eqref{dsmm} over a Borel set $E \subset Q$, we obtain
$$ \int_E \frac{d}{dt}x(t,q)dq =  \int_E
\bigl[\int_Q f_1(X(t),\hat q) p(q,\hat q) x(t,\hat q) d \hat q  -
f_2(X(t),q) x(t,q)\bigr]dq .$$ Changing order of integration we get
$$ \begin{array}{lll}
\int_E \frac{d}{dt}x(t,q)dq &=& \int_Q f_1(X(t),\hat q)\bigl[\int_E
p(q,\hat q)dq\bigr]
x(t,\hat q) d \hat q - \int_E f_2(X(t),q) x(t,q)dq\\
&=& \int_Q f_1(X(t),\hat q)\gamma(\hat q) (E) x(t,\hat q) d \hat q -
\int_E f_2(X(t),q) x(t,q)dq,
\end{array}
$$
where $\gamma(\hat q)(E)=\int_E p(q, \hat q) dq$ is the proportion
of $\hat q$ 's offspring in the Borel set $E$.

This yields the following measure valued dynamical system:
\begin {equation} \left\{\begin{array}{ll}\label{M1}
 \displaystyle \frac{d}{dt}{\mu}(t; u, \gamma)(E) = \int_Q {f}_1(\mu(t)(Q), \hat q) \gamma(\hat q)(E)d\mu(t)(\hat q)\\
\hspace{1.2 in} - \displaystyle \int_E {f}_{2}(\mu(t)(Q),\hat q)
d\mu(t)(\hat q) =  {F} (\mu, \gamma)(E) \\
\mu(0; u,\gamma)=u.
\end{array}\right.\end{equation}

\section{Well-Posedness of Measure-Valued Dynamics }\label{wp}
In this section we focus on the well-posedness of the model \eqref{M1}.  This requires
setting up some notation and notions and establishing several lemmas
and propositions. To this end, throughout Section \ref{wp} the strategy space $(Q,d)$ will be a compact
complete separable metric space otherwise known as a compact Polish
space. The reader may think of a compact Riemannian manifold or a
compact subset of ${\rm int}(\mathbb{R}_+^n).$

\subsection{Birth and Mortality Rates}
Concerning the birth and mortality densities $f_1$ and $f_2$ we make assumptions similar
to those used in \cite{AFT}:
\begin{itemize}
\item[(A1)] $f_1: \mathbb{R}_+ \times Q \rightarrow \mathbb{R}_+$ is locally Lipschitz
continuous in $X$ uniformly with respect to $q$, nonnegative, and nonincreasing  on $\mathbb{R}_+$ in $X$
and continuous in $q$.
\item[(A2)] $f_2: \mathbb{R}_+ \times Q \rightarrow \mathbb{R}_+$ is locally Lipschitz continuous in $X$ uniformly with respect to $q$,
nonnegative, nondecreasing on $\mathbb{R}_+$ in $X$, continuous in
$q$ and $ \inf_{q \in Q} {f_{2}(0, q)} =\varpi
>0 $. (This means that there is some inherent mortality not density
related)
\end{itemize}
These assumptions are of sufficient generality to capture many nonlinearities of classical population dynamics including Ricker,
Beverton-Holt, and Logistic (e.g.,  see \cite{AFT}).

 \subsection{Technical Preliminaries for Measure Valued Formulation
 }\label{subsection1}

\subsubsection{Important Notation and Technical Definitions}

 We will use the symbol ${\cal M}$ to denote the set of
finite signed Borel measures when we wish to view it as a Riesz
space \cite{AliBord} and $\mathcal{M}_+$ will denote its positive
cone. If the total variation norm is denoted $|\cdot|_V$, then
$\mathcal{M}_V$ will denote the Banach space of the finite signed
measures with the total variation norm. Definition \ref {duality} in
the Appendix tells us that the duality $<C(Q),\mathcal{M}>$ given by
$ <f,\mu> \mapsto \int_Q f(q) d \mu $ generates a $weak^*$ topology
on $\mathcal{M}$ which we denote as $\mathcal{M}_w$, i.e., the
locally convex TVS (topological vector space) $(\mathcal{M},
\sigma(\mathcal{M},C(Q)).$ If $S \subseteq \mathcal{M}$, $S_w$
denotes the same set under the $weak^*$ topology and $S_V$ the same
set under total variation. If no topology is indicated then $S$ is
simply a subset of the Riesz space of ordered measures. Also $S_+ =
S \cap \mathcal{M}_+$. Let $\mathcal P_w $ denote the probability
measures under the $weak^*$ topology and $ C^{po} =
C(Q,\mathcal{P}_w(Q))$, the continuous functions on $Q$ with the
topology of uniform convergence.

Note that the EGT model we study here is a dynamical system arising
from an ODE. A common method used to establish existence and
uniqueness of solutions to such dynamical systems is to apply a
contraction mapping argument to a suitably chosen complete metric space. Indeed, this is the method we adopt
here.

 To this end
if $a, b>0 $ and $ \mu_{0} \in
\mathcal{M}_{+}$ are given, let $ I_b({0})$ be the interval
$[0,b)$, and $\overline{B_{a}(\mu_{0})}$ be
the closed total variation ball of radius $a$ around $\mu_{0}.$
Since the space of finite signed measures $ \mathcal{M}_V$ under
total variation norm is a Banach Space,  if $\cal X$ is any set then
the bounded maps from $\cal X$ into $\mathcal{M}_V$ under the sup
norm, i.e., $\|f\|_S= \sup_{x \in {\cal X} } | f(x)|_ {V}$ is
another Banach space denoted $ \cal{BM}(\cal X) := ({\cal BM} ({\cal
X}), \|\cdot\|_S)$.  $\cal{BM}(\cal X)$ is the
space in which we are {\it always} working and should be kept in
mind when we begin the fixed point argument as there are several
topologies being used.
For our dynamical system purposes we are interested in the set $
{\cal X} = \overline{I_b({0})} \times
(\overline{B_{a,+}(\mu_{0})})_{w} \times C^{po}. $ Let's denote by
      $ {\cal{C}} \Bigl( \overline{I_b({0})} \times
(\overline{B_{a,+}(\mu_{0})})_w \times C^{po} ;
(\overline{B_{2a}(\mu_{0})})_w \Bigr)$ the closed subcollection of continuous maps into $(\overline{B_{2a}(\mu_{0})})_w.$
Then it is an exercise to show that $(M(a,b),\|\cdot\|_S)$ where
$$ \begin{array}{l}  M(a,b)= \{ \alpha \in {\cal{BM}}({\cal X})|~~
\alpha \in {\cal{C}} \Bigl( \overline{I_b({0})} \times
(\overline{B_{a,+}(\mu_{0})})_w \times C^{po} ;
(\overline{B_{2a}(\mu_{0})})_w \Bigr) ,  \\
\hspace{4 in} \alpha \ge 0, \alpha (0;u,\gamma) =u \}
\end{array}
$$ is a nonempty closed metric subspace of the
complete metric space ${\cal{BM}} \Bigl(
 \overline{I_b({0})} \times
\overline{B_{a,+}(\mu_{0})}  \times C^{po}\Bigl).$

We will let $\vec{\textbf{0}}$ denote the zero measure, \textbf{1}
denote the constant function one (from $Q$ to $\mathbb{R}$), and if $\alpha \in M(a,b)$, then
we will at times write $\alpha(t)$ for $\alpha(t;u,\gamma)$ when we
are keeping $ u,\gamma$ fixed.

 \subsubsection{Families of Measures and Mutation Kernels
 $\Bigl(
\int_Q f_1(X, \hat q) \gamma(\hat q) d\mu( \hat q), \\ \int_{T\times
Q } f_1(X(s),\hat q) \overline{\gamma}_{s,t,
\alpha(\cdot;u,\gamma)}(\hat q) d\mu( \hat q) \times ds
\Bigr) $ } \label{familiesofmeasures}
 In order to understand this section we must first understand all of the duals that we will be using. As they can be confusing. Given a vector space $V$ or more generally a Riesz space, one
automatically has an algebraic dual denoted $V^{\sharp}$. If $V$ is
also a topological vector space, then there is the continuous dual
denoted $V'$ with the relation $V' \subseteq V^{\sharp}$. Since $V'$
is also a vector space we can form ${V'}^{\sharp}$ which has the
relation $V \subseteq {V^{\sharp}}^{\sharp}\subseteq {V'}^{\sharp}$. The first $ \subseteq$ is actually the natural algebraic monomorphism   $v \mapsto \delta_{v}$.
So given $v\in V$ there are three ways to view this element given by
each inclusion. We shall have occasion to use this fact when
defining our mutation term. \\

A measure is both a countably additive set function
and also a continuous linear functional on $C(Q)$ \cite{NB2}. For
example, if $\nu$ is a measure
$$\nu(\textbf{1}) =\nu(Q) =\int_Q d\nu .$$ Each
view is useful in its own right. For example, if one wishes to model
the sizes of populations then speaking of the``measure" of a Borel
set intuitively has the meaning size of population. Speaking of the
value of a linear functional on a continuous function is less
intuitive biologically. However, for mathematical purposes at times
the linear functional viewpoint is more beneficial. So in our proofs we will use the functional definition, however in biological explanations we will use the set function approach.\\

We are all familiar with point masses and absolutely continuous
measures. However, in the formulation of this model we come upon a
novel type of measure. This measure is defined as the integral of a
family of measures. If $T$ is a closed interval of $\mathbb{R}_+$,
then $T\times Q$ is compact. If $ \gamma \in C^{po}$, $\alpha \in
M(a,b)$, then define
$${\overline{\gamma}}_{s,t, \alpha(\cdot;u,\gamma)}(\hat q)(E) =
 \int_{E}e^{-\int_{s}^{t} f_{2}(\alpha(\tau)(Q),q)d\tau}d\gamma(\hat q)(q). $$
From a biological point of view  $\overline{\gamma}_{s,t,
\varphi(\cdot;u,\gamma)}(\hat q)(E)$ is the
 net proportion of $\hat q $'s offspring that belong to $E$ from time $s$ to time $t$.
 Since $f_{1}(\varphi(s;u, \gamma)(Q),\hat q) \mu(d\hat q)$ is the number of
offspring produced by a $d\hat q $ ball, $f_{1}(\varphi(s;u, \gamma)(Q),\hat
q)\overline{{\gamma}}_{s,t,\varphi(\cdot;u, \gamma)}(\hat q)(E)\mu(d\hat q) $ is
the total contribution of the $d\hat q $ ball to the Borel set $E$ by total
new recruits from time $s$ to $t$.

If $f_1$ is bounded and $\gamma \in C^{po} $, then we wish to
consider two mappings: (1) for each $X$ the mapping $\hat q \mapsto
f_1(X, \hat q) \gamma(\hat q)$; (2) $(s,\hat q)\mapsto f_1(X(s),
\hat q){\overline{\gamma}}_{s,t, \alpha(\cdot; u,\gamma)}(\hat q)$.
They are both weakly continuous mappings with compact support that
map into a complete convex subset of the locally convex space
$\mathcal{M}_w$. So if $\mu \in \mathcal{M}$, then $\int_Q f_1(X,
\hat q) \gamma(\hat q) d\mu( \hat q) $ and $\int_{T\times Q }
f_1(X(s),\hat q) \overline{\gamma}_{s,t, \alpha(\cdot;
u,\gamma)}(\hat q) (d\mu( \hat q) \times ds ) $ exists and are also
elements of $\mathcal{M}$ by Theorem \ref{int} in the Appendix. Let
us be more clear. These two integrals are elements of $\mathcal{M}$
in the following sense. Let $~ \widehat{\cdot}~ $ denote the
canonical algebraic imbedding $~ \widehat{\cdot} : \mathcal{M}
\hookrightarrow (\mathcal{M}^{\sharp})^{\sharp}$ given by
 $ \widehat{\nu} (f) = f(\nu)$ or $\widehat{\nu} = \delta_{\nu}$, where $ \delta_{\nu} (f) = f(\nu)$
for $ f \in \mathcal{M}{^\sharp}$ is the evaluation homomorphism.
Since $(\mathcal{M}_w)' \subseteq \mathcal{M}{^\sharp}$,
$(\mathcal{M}{^\sharp})^\sharp \subseteq (\mathcal{M}_w')^\sharp .$
So $\nu \mapsto \delta_{\nu}$ actually algebraically imbeds $
\mathcal{M} \hookrightarrow (\mathcal{M}_w')^{\sharp}.$ So viewing
$\nu$ as the algebraic linear functional $\delta_{\nu}$ is what we
mean. More to the point, let us consider only the first integral
 $\int_Q f_1(X, \hat q) \gamma(\hat q) d\mu( \hat q)
$, since the second can be understood similarly. By Theorem
\ref{int} and the above discussion $\int_Q f_1(X, \hat q)
\gamma(\hat q) d\mu( \hat q) $ is an element $\delta_{\nu} \in
(\mathcal{M}_w')^{\sharp}$ and by Definition \ref{integral}
 $$ z'(\nu)= \delta_{\nu}(z')=\mu(z'(f_1\gamma))\quad \text{ where } \quad z' \in
(\mathcal{M}_w)'.$$ Since $<C(Q),\mathcal{M}_V> $ is a duality by
Theorem \ref{Duality}, for $z' \in (\mathcal{M}_w)'$ there is a
unique $z \in C(Q)$ such that $z'(\mu)=<z,\mu>$ for all $\mu \in
\mathcal{M}$.
 Hence, \begin{equation}\label{I}
 <z',\hat \nu>=<z',\delta_{\nu}>=\delta_{\nu}(z')=z'(\nu)=<z,\nu>, \quad \text{for all} ~ z'
\in (\mathcal{M}_w)'.
\end{equation}
So if $\int_Q f_1(X, \hat q) \gamma(\hat q) d\mu( \hat q)
=\delta_{\nu}=\widehat{\nu} $, then we define $\int_Q f_1(X, \hat q)
\gamma(\hat q) d\mu( \hat q)$ to be the measure $\nu$ which behaves
as in \eqref{I}. Similarly for $\int_{[0,T]\times Q } f_1(X(s),\hat
q) \overline{\gamma}_{s,t, \alpha(\cdot;u,\gamma)}(\hat q) (d\mu(
\hat q) \times ds )$.

If $f$ is continuous, the measure $ E \mapsto \int_E f(q) d\mu(q)$ as a functional has the action: $ z \mapsto \int_Q z(q)f(q) d\mu(q) $ for $z \in C(Q)$. For the remainder of this section we will denote such a functional as $ <\int f(q) d\mu(q), \cdot>$.
Before we end this section we will draw a connection between
the continuous functional and set function aspects of these families
of measures.

\begin{theorem}\label{Pos} Let $\mu \in \mathcal{M}_+ $. If $f: Q\rightarrow \mathcal{M}_w$ is continuous and bounded in total variation,
then $$\Bigl(\int_Q f(\hat q) d\mu \Bigr )(E) = \int_Q f(\hat q)(E)
d\mu$$ for every Borel set E.
\end{theorem}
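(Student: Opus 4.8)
The plan is to manufacture from the right-hand side an honest finite signed Borel measure $\lambda$ on $Q$, verify it really is a measure, and then use the duality $\langle C(Q),\mathcal{M}\rangle$ to identify $\lambda$ with $\nu:=\int_Q f(\hat q)\,d\mu$. The only thing I would draw on about $\nu$ is its defining property coming from Definition \ref{integral} together with \eqref{I}: the hypothesis $\mu\in\mathcal{M}_+$ guarantees (via Theorem \ref{int}) that $\nu$ exists in $\mathcal{M}$, and $\nu$ is characterized as the unique element of $\mathcal{M}$ with
\[ \langle z,\nu\rangle=\int_Q \langle z,f(\hat q)\rangle\,d\mu(\hat q)\qquad\text{for all }z\in C(Q).\quad(\star) \]
The difficulty is exactly that $(\star)$ only pins $\nu$ down through \emph{continuous} test functions $z\in(\mathcal{M}_w)'=C(Q)$, whereas evaluating $\nu(E)$ means pairing $\nu$ with the \emph{discontinuous} indicator $\chi_E$; closing that gap is the whole content of the theorem.

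First I would settle measurability and integrability of the integrand on the right. Put $M:=\sup_{\hat q\in Q}|f(\hat q)|_V<\infty$, which is finite since $f$ is bounded in total variation, and for bounded Borel $g\colon Q\to\mathbb{R}$ set $\Phi_g(\hat q):=\int_Q g\,d[f(\hat q)]$. For $g\in C(Q)$, weak$^*$ continuity of $f$ makes $\Phi_g$ continuous, hence $\mu$-measurable. The class $\mathcal{H}$ of bounded Borel $g$ with $\Phi_g$ $\mu$-measurable is a vector space containing $C(Q)$ and is closed under bounded pointwise limits: if $g_n\to g$ with $\sup_n\|g_n\|_\infty<\infty$, then dominated convergence against each fixed $|f(\hat q)|$ (using $|f(\hat q)|_V\le M$) gives $\Phi_{g_n}(\hat q)\to\Phi_g(\hat q)$ pointwise, so $\Phi_g$ is again measurable. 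By the functional monotone class theorem $\mathcal{H}$ is all bounded Borel functions; taking $g=\chi_E$ gives measurability of $\hat q\mapsto f(\hat q)(E)$, and $|f(\hat q)(E)|\le M$ with $\mu$ finite gives integrability.

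Next I would check that $\lambda(E):=\int_Q f(\hat q)(E)\,d\mu$ is a finite signed Borel measure. Finiteness ($|\lambda(E)|\le M\,\mu(Q)$) and $\lambda(\emptyset)=0$ are immediate. For countable additivity, write $E=\bigsqcup_n E_n$; then $\sum_{n\le N}f(\hat q)(E_n)=f(\hat q)\bigl(\bigsqcup_{n\le N}E_n\bigr)\to f(\hat q)(E)$ pointwise in $\hat q$, with partial sums bounded by $M$ in absolute value, so dominated convergence in $\mu$ yields $\lambda(E)=\sum_n\lambda(E_n)$.

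Finally I would identify $\lambda$ with $\nu$. A Fubini-type approximation shows that for every bounded Borel $z$,
\[ \int_Q z\,d\lambda=\int_Q\Bigl(\int_Q z\,d[f(\hat q)]\Bigr)\,d\mu(\hat q):\quad(\dagger) \]
it holds for $z=\chi_E$ by the definition of $\lambda$, extends to simple functions by linearity, and to bounded Borel $z$ by dominated convergence on both sides (using the bound $|\Phi_{z_n}|\le M\|z_n\|_\infty$ on the right and $|\lambda|$ on the left). Specializing $(\dagger)$ to $z\in C(Q)$ and comparing with $(\star)$ gives $\langle z,\lambda\rangle=\langle z,\nu\rangle$ for all $z\in C(Q)$. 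Since finite signed Borel measures on the compact metric space $Q$ are regular and hence separated by $C(Q)$ — precisely the duality of Theorem \ref{Duality} — we conclude $\lambda=\nu$, which is the asserted identity for every Borel $E$. I expect the measurability step to be the main obstacle, as it is where the lack of a continuous test function must be circumvented; it is also where positivity and finiteness of $\mu$, together with the uniform total-variation bound $M$, keep every dominated-convergence estimate clean.
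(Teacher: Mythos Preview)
Your argument is correct, and it takes a genuinely different route from the paper's own sketch. The paper defines the same candidate $\nu_2(E)=\int_Q f(\hat q)(E)\,d\mu$, invokes outer regularity of finite Borel measures on the metric space $Q$ to reduce the identity to open sets, and then for an open $G$ writes $\nu_1(G)=\sup_{h\le\varphi_G}\nu_1(h)$ and pushes the supremum through the $\mu$-integral to land on $\nu_2(G)$. Your approach instead (i) secures the measurability of $\hat q\mapsto f(\hat q)(E)$ via a functional monotone class argument starting from $C(Q)$, (ii) verifies directly that $\lambda$ is a finite signed measure, and (iii) identifies $\lambda$ with $\nu$ by testing against $C(Q)$ and invoking the separation property of the duality. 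What your route buys is a clean treatment of the measurability issue the paper leaves as an ``elementary exercise,'' and it sidesteps the $\sup$/integral interchange in the paper's sketch --- a step which, as written there, tacitly uses positivity of both $\mu$ and the values $f(\hat q)$ (the latter is not part of the hypothesis, though it holds in every application in the paper). The paper's approach, on the other hand, is shorter once one accepts regularity and the monotone-convergence exchange, and it makes the role of approximating $\chi_G$ from below by continuous functions explicit. Either path is sound; yours is the more self-contained one.
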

\begin{proof} We give a sketch of the proof and refer the reader
to \cite{AliBord,NB2,Royden} for background definitions and details.
Since $Q$ is a metric space, it is outer normal, hence outer regular
\cite[pg. 379]{AliBord}. Thus, the value of a finite signed measure
is completely known once it is known on open sets. To this end let
$\nu_2(E)= \int_Q f(\hat q)(E) d\mu$. Then it is an elementary
exercise to demonstrate that $\nu_2 $ is a finite signed measure
\cite{Royden}. Using Theorem \ref{int} and the analysis before this
theorem $\nu_1=\int_Q f(\hat q) d\mu \in \mathcal{M}$.

We will show that $\nu_1 = \nu_2$ on open sets. By definition, since
the characteristic functions of open sets are lower semi-continuous,
if $G$ is open and $\varphi_G$ is its characteristic function, then
$$
\begin{array}{ll}
\nu_1(G) ~:= & \nu_1^*(\varphi_G) = \sup_{h \in C(Q), h \leq
\varphi_G}\nu_1(h)=\sup_{h \in C(Q), h \leq \varphi_G} \int_Q f(\hat
q)(h) d\mu \\ \\ & = \int_Q \sup_{h \in C(Q), h \leq \varphi_G}
f(\hat q)(h) d\mu = \int_Q  (f(\hat
q))^*\underline{\underline{}}(\varphi_G) d\mu =:\nu_2(G)  .
\end{array}$$ \end{proof}

\subsection{ Main Well-Posedness Theorem}
The following is the main theorem of this section.
\begin{theorem}\label{main}  There exists a continuous dynamical system $({\cal M}_{+,w}, C^{po},\varphi)$ where $\varphi:
{\mathbb {R}_+} \times{\cal M} _{+,w} \times C^{po} \to {\cal M}_{+,w}
  $ satisfies the following:
\begin{enumerate}
\item  For fixed $ u, \gamma $, the mapping $t \mapsto \varphi(t;
u,\gamma)$ is continuously differentiable in total variation, i.e.,
$\varphi(\cdot, u, \gamma): {\mathbb {R}_+} \to {\cal M}_{V,+}$.
 \item For fixed $ u, \gamma $, the mapping $t \mapsto \varphi(t; u,\gamma)$ is the unique \emph{solution} to

 \begin {equation} \left\{\begin{array}{ll}\label{M}
 \displaystyle \frac{d}{dt}{\mu}(t)(E) = \int_Q {f}_1(\mu(t)(Q), \hat q) \gamma(\hat q)(E)d\mu(t)(\hat q)\\
\hspace{1.2 in} - \displaystyle \int_E {f}_{2}(\mu(t)(Q),\hat q)
d\mu(t)(\hat q) =  {F} (\mu, \gamma)(E) \\
\mu(0)=u.
\end{array}\right.\end{equation}
\end{enumerate}
\end{theorem}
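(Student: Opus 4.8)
The plan is to recast \eqref{M} as an equivalent variation-of-constants integral equation and to obtain its solution as the unique fixed point of a contraction on the complete metric space $(M(a,b),\|\cdot\|_S)$, exactly the machinery set up in Sections \ref{subsection1}--\ref{familiesofmeasures}. The point is that the mortality term $-\int_E f_2\,d\mu$ acts as a decay in the $q$-variable whose integrating factor from time $s$ to time $t$ is precisely the exponential weight built into $\overline{\gamma}_{s,t,\alpha}$. This dictates the definition of the solution operator: for $\alpha\in M(a,b)$ set
\[
\Phi(\alpha)(t;u,\gamma)(E)=\int_E e^{-\int_0^t f_2(\alpha(\tau)(Q),q)\,d\tau}\,du(q)+\int_0^t\!\!\int_Q f_1(\alpha(s)(Q),\hat q)\,\overline{\gamma}_{s,t,\alpha}(\hat q)(E)\,d\alpha(s)(\hat q)\,ds,
\]
where the inner object is interpreted as an element of $\mathcal M$ via Theorem \ref{int} and Theorem \ref{Pos}. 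A formal differentiation in $t$, using $\overline{\gamma}_{t,t,\alpha}=\gamma$ together with the Leibniz rule, recovers the birth term $\int_Q f_1\,\gamma(\hat q)(E)\,d\mu(t)$ from the upper limit and merges the two decay contributions into $-\int_E f_2\,d\mu(t)$; I would justify this rigorously only after the fixed point is produced, which then simultaneously establishes item~1 (differentiability in total variation) and the equivalence of $\Phi(\alpha)=\alpha$ with \eqref{M}.

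First I would verify that $\Phi$ maps $M(a,b)$ into itself. Nonnegativity of $\Phi(\alpha)$ is immediate because the survivorship factor, $\overline{\gamma}_{s,t,\alpha}$, $f_1$ and $d\alpha(s)$ are all nonnegative under (A1)--(A2); and $\Phi(\alpha)(0;u,\gamma)=u$ since the second integral vanishes and the first reduces to $u(E)$. For the range constraint I would use that on the relevant population window $X\in[0,|\mu_0|_V+2a]$ the maps $f_1,f_2$ are bounded (by local Lipschitzness) by constants $M_1,M_2$, and that $|\overline{\gamma}_{s,t,\alpha}(\hat q)|_V\le 1$; the vector-integral estimate $\bigl|\int_Q g(\hat q)\,d\nu\bigr|_V\le \sup_{\hat q}|g(\hat q)|_V\,|\nu|_V$ then yields $|\Phi(\alpha)(t)-u|_V\le M_1 t\,(|\mu_0|_V+2a)$, so choosing $b$ small gives $|\Phi(\alpha)(t)-\mu_0|_V\le 2a$. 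Joint weak$^{*}$ continuity of $\Phi(\alpha)$ on $\overline{I_b(0)}\times(\overline{B_{a,+}(\mu_0)})_w\times C^{po}$ follows from continuity of the integrands together with dominated convergence for the weak integrals, placing $\Phi(\alpha)$ in the continuity class defining $M(a,b)$.

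The main obstacle is the contraction estimate in $\|\cdot\|_S$, since one must control a difference of \emph{measure-valued} integrals in total variation while the nonlinearity enters through the total mass $\alpha(\tau)(Q)$ appearing both in $f_1,f_2$ and inside the exponential weight of $\overline{\gamma}$. For $\alpha,\beta\in M(a,b)$ I would split $\Phi(\alpha)-\Phi(\beta)$ into contributions from: the two survivorship/weight factors, estimated via $|e^{-x}-e^{-y}|\le|x-y|$ and the Lipschitz bound on $f_2$, giving a factor $\int_0^t L_2|\alpha(\tau)(Q)-\beta(\tau)(Q)|\,d\tau$; the difference $f_1(\alpha(s)(Q),\hat q)-f_1(\beta(s)(Q),\hat q)$, controlled by $L_1$; and the difference of the integrating measures $d\alpha(s)-d\beta(s)$, controlled by the total-variation estimate above. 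Since $|\alpha(\tau)(Q)-\beta(\tau)(Q)|\le|\alpha(\tau)-\beta(\tau)|_V\le\|\alpha-\beta\|_S$, every term is bounded by $C\,t\,\|\alpha-\beta\|_S$ with $C=C(a,\mu_0,M_1,M_2,L_1,L_2)$; taking the supremum over $\mathcal X$ (where $t\le b$) gives $\|\Phi(\alpha)-\Phi(\beta)\|_S\le Cb\,\|\alpha-\beta\|_S$, and shrinking $b$ so that $Cb<1$ makes $\Phi$ a contraction. The Banach fixed point theorem on the complete space $(M(a,b),\|\cdot\|_S)$ then produces a unique local solution $\varphi\in M(a,b)$.

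It remains to globalize and to verify the dynamical-system axioms. Setting $E=Q$ in \eqref{M} and using $\gamma(\hat q)(Q)=1$, the total mass $X(t)=\varphi(t;u,\gamma)(Q)$ obeys $X'(t)=\int_Q\bigl(f_1(X(t),\hat q)-f_2(X(t),\hat q)\bigr)\,d\varphi(t)(\hat q)$; with $f_1\le M_1$ and $f_2(\cdot,q)\ge\varpi>0$ from (A2), and $\varphi(t)\ge\vec{\textbf 0}$, this yields a scalar differential inequality forcing $X$ into a bounded invariant interval. Because $\varphi(t)\in\mathcal M_+$ gives $|\varphi(t)|_V=X(t)$, no finite-time blow-up in total variation can occur, so the standard continuation argument extends the local solution to all of $\mathbb R_+$, giving item~1 and the global part of item~2. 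Uniqueness propagates from the local contraction via a Gronwall estimate on $|\varphi(t)-\psi(t)|_V$. Finally, joint weak$^{*}$ continuity of $\varphi$ (inherited from membership in the continuity class), the initial condition $\varphi(0;u,\gamma)=u$, and the cocycle identity $\varphi(\theta_1+\theta_2;u,\gamma)=\varphi(\theta_2;\varphi(\theta_1;u,\gamma),\gamma)$ — the last obtained by observing that $t\mapsto\varphi(\theta_1+t;u,\gamma)$ solves \eqref{M} with initial datum $\varphi(\theta_1;u,\gamma)$ and invoking uniqueness — establish that $(\mathcal M_{+,w},C^{po},\varphi)$ is a continuous dynamical system in the sense of the Definition, completing the proof.
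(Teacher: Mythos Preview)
Your proposal is correct and follows essentially the same route as the paper: the same variation-of-constants operator on $M(a,b)$, the same contraction argument in $\|\cdot\|_S$ (the paper's Lemma~\ref{FP}), differentiation of the fixed point to recover \eqref{M} (Proposition~\ref{LS}), and global extension via the scalar bound on $X(t)=\varphi(t)(Q)$. The only cosmetic difference is that the paper first truncates $f_j$ to globally bounded Lipschitz $\widetilde f_j$ and afterwards observes $\widetilde F=F$ on $\overline{B_{2a}(\mu_0)}$, whereas you work directly with the local bounds on the window $[0,|\mu_0|_V+2a]$; also note your estimate $|\Phi(\alpha)(t)-u|_V\le M_1 t(|\mu_0|_V+2a)$ omits the survivorship-decay contribution $(1-e^{-M_2 t})|u|_V$ from the first summand, which the paper tracks explicitly but which does not affect the argument.
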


We now establish a few results that are needed to prove
Theorem \ref{main}.
 \subsubsection{Local Existence and Uniqueness of
Dynamical System}\label{subsection2}

First let $ \mu_{0} \in
\mathcal{M}_+$ and $a > 0$ be fixed. As it stands $F(\mu,\gamma)$ as
defined in \eqref{M} need not be a finite signed measure at all. If
$\mu(t)(Q)$ is ever negative, then $F(\mu(t),\gamma)$ is not
defined. So we modify $F$ as follows:  Choose $\tilde K >  \mu_{0}(Q) +2a$.
 For $j=1,2$, extend $f_j$ to $\R \times Q$ by setting $\widetilde{f}_j(x,q) = f_j(0,q) $ for $x \le 0$
 and make the modification $ \widetilde{f}_j(x,q) = f_j(\tilde K,q) $ for $x \ge \tilde
 K$. Then $ \widetilde{f}_j: \R\times Q \to \R_+$ are  Lipschitz
continuous in the first variable and bounded with Lipschitz
constants $L_j$ and bounds $B_j$. Let $ \widetilde{F} (\mu, \gamma)
(E)$ be the redefined vector field obtained by replacing $f_j$ with
$\widetilde{f}_j$. The function $ \widetilde{F} (\mu, \gamma) $
is now a finite signed measure.

\begin{lemma} \label{LF}(Lipschitz F)  Let $\widetilde{F}$ be as above and let
$W \subseteq \mathcal{M}$ be bounded in total variation. Then for
every $\gamma \in C^{po}$ we have the following:
\begin{enumerate}
\item  There exists a continuous function $K_{\widetilde{F}} \geq 0,$ such that $|\widetilde{F}(\alpha, \gamma)|_V \leq
K_{\widetilde{F}}(|\alpha|_V) |\alpha| _V,$ $ \forall \alpha \in
\mathcal{M}$.
\item  $\widetilde{F}(\alpha, \gamma)$ is
bounded and uniformly Lipschitz continuous on $(W_{+})_V \times
C^{po}$ in $\alpha.$
\end{enumerate}

\end{lemma}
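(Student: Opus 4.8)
The plan is to split $\widetilde{F}(\alpha,\gamma)=B(\alpha,\gamma)-D(\alpha,\gamma)$ into its birth and mortality parts and to compute every total variation through the duality $<C(Q),\mathcal{M}_V>$ of Theorem \ref{Duality}, using $|\nu|_V=\sup_{\|z\|_\infty\le 1}|<\nu,z>|$. In functional form these parts act on a test function $z\in C(Q)$ by
\[
<B(\alpha,\gamma),z>=\int_Q \widetilde{f}_1(\alpha(Q),\hat q)\,<\gamma(\hat q),z>\,d\alpha(\hat q),\qquad <D(\alpha,\gamma),z>=\int_Q z(\hat q)\,\widetilde{f}_2(\alpha(Q),\hat q)\,d\alpha(\hat q),
\]
the second being the action recorded just before Theorem \ref{Pos} and the first the functional form of the family-of-measures integral from Subsection \ref{familiesofmeasures}. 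Two facts drive every estimate: each $\gamma(\hat q)$ is a probability measure, so $|<\gamma(\hat q),z>|\le \|z\|_\infty|\gamma(\hat q)|_V=1$, and each $\widetilde{f}_j$ is bounded by $B_j$ and $L_j$-Lipschitz in its first argument.

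For Part 1, I would fix $z$ with $\|z\|_\infty\le 1$ and bound under the integral sign to get $|<B(\alpha,\gamma),z>|\le B_1|\alpha|_V$ and $|<D(\alpha,\gamma),z>|\le B_2|\alpha|_V$; taking the supremum over such $z$ yields $|\widetilde{F}(\alpha,\gamma)|_V\le (B_1+B_2)|\alpha|_V$, so one may simply take the constant function $K_{\widetilde{F}}\equiv B_1+B_2$, which is nonnegative and trivially continuous. Boundedness in Part 2 is then immediate: if $|\alpha|_V\le M$ on $W$, then $|\widetilde{F}(\alpha,\gamma)|_V\le (B_1+B_2)M$.

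The uniform Lipschitz estimate is the substantive part. For $\alpha,\beta\in (W_{+})_V$ and any $\gamma\in C^{po}$ I would apply the add-and-subtract decomposition to each functional. For the birth term,
\[
<B(\alpha,\gamma)-B(\beta,\gamma),z>=\int_Q [\widetilde{f}_1(\alpha(Q),\hat q)-\widetilde{f}_1(\beta(Q),\hat q)]\,<\gamma(\hat q),z>\,d\alpha(\hat q)+\int_Q \widetilde{f}_1(\beta(Q),\hat q)\,<\gamma(\hat q),z>\,d(\alpha-\beta)(\hat q).
\]
The first integral is controlled by $|\widetilde{f}_1(\alpha(Q),\hat q)-\widetilde{f}_1(\beta(Q),\hat q)|\le L_1|\alpha(Q)-\beta(Q)|\le L_1|\alpha-\beta|_V$ (since $(\alpha-\beta)(Q)$ is the pairing of $\alpha-\beta$ with the constant function $1$, whose sup-norm is $1$) together with $|\alpha|_V\le M$, giving a bound $L_1M|\alpha-\beta|_V$; the second is bounded by $B_1|\alpha-\beta|_V$. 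The mortality term is handled identically and contributes $(L_2M+B_2)|\alpha-\beta|_V$. Summing and taking the supremum over $\|z\|_\infty\le 1$ gives
\[
|\widetilde{F}(\alpha,\gamma)-\widetilde{F}(\beta,\gamma)|_V\le \big[(L_1+L_2)M+B_1+B_2\big]\,|\alpha-\beta|_V,
\]
a constant independent of $\gamma$ and of the chosen $\alpha,\beta\in (W_{+})_V$, which is exactly the claimed uniform Lipschitz bound.

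The scalar bookkeeping above is routine; the step that needs genuine care is the passage between the set-function and functional descriptions of the birth measure and the legitimacy of estimating its total variation through the duality pairing. This is where I would lean on Theorem \ref{Pos} and Theorem \ref{int}: the former identifies the family-of-measures integral as $E\mapsto\int_Q \widetilde{f}_1\,\gamma(\hat q)(E)\,d\alpha$ for $\alpha\in\mathcal{M}_+$, while the functional viewpoint is what lets me integrate the scalar function $\hat q\mapsto <\gamma(\hat q),z>$ against the \emph{signed} measure $\alpha-\beta$ without a Jordan decomposition, a point worth isolating since Theorem \ref{Pos} is stated only for positive $\mu$. The one identity I would cite rather than recompute is that $|\nu|_V=\sup_{\|z\|_\infty\le 1}|<\nu,z>|$ realizes the total variation norm as the dual norm on $(C(Q))'$ via the Riesz representation underlying $<C(Q),\mathcal{M}_V>$.
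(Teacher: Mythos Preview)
Your proof is correct and follows essentially the same add-and-subtract decomposition as the paper. The only cosmetic difference is that the paper takes $K_{\widetilde{F}}(s)=B_1+B_2+(L_1+L_2)s$ and derives Part~1 from the Lipschitz estimate \eqref{lipestimate} by setting $\beta=\vec{\textbf{0}}$ (using $\widetilde{F}(\vec{\textbf{0}},\gamma)=\vec{\textbf{0}}$), whereas you obtain the sharper constant $K_{\widetilde{F}}\equiv B_1+B_2$ directly; the four-term Lipschitz argument in Part~2 and the resulting constant $(L_1+L_2)M+B_1+B_2$ are identical in both proofs.
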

\begin{proof}
\begin{enumerate}
\item Define $K_{\widetilde{F}}(s) = B_{1} +B_{2} +(L_{1}+L_{2})s $. Since
$ F( \vec{0}, \gamma) =\vec{0} $, this follows from equation
\eqref{lipestimate} below.
\item Let $C_W$ be a
bound for $W$ in the norm topology, i.e., $|\mu|_V\leq C_W$ for $\mu
\in W$. We now prove uniform Lipschitz continuity in $\alpha $. The
boundedness trivially follows. Given $W$, notice that for all
$\alpha \in W$, $K_{\widetilde{F}}(|\alpha|_V) \leq
K_{\widetilde{F}}(C_W)$.
 If $\alpha$ and $\beta$ are finite signed measures, then
$d(\alpha)= d(\alpha - \beta + \beta)$.
 Hence, $$ \begin{array}{lll} \widetilde{F}(\alpha,\gamma) - \widetilde{F}(\beta,\gamma)&=& \int_Q
\gamma(\hat q)[ \widetilde{f}_{1}(\alpha(Q),\hat q) -
\widetilde{f}_{1}(\beta(Q),\hat q)]d\alpha(\hat q)\\
&& \quad  +
 \int_{Q}\widetilde{f}_{1}(\beta(Q),\hat
q)\gamma(\hat q)d(\alpha-\beta)(\hat q) \\
&&\quad - <\int[ \widetilde{f}_{2}(\alpha(Q),\hat q)-
\widetilde{f}_{2}(\beta(Q),\hat q)]d\alpha(\hat q), \cdot>
 \\
 && \quad -<\int \widetilde{f}_{2}(\beta(Q),\hat q)d(\alpha-\beta)(\hat
q), \cdot>, \end{array}$$ and
 $$ | \widetilde{F} (\alpha,\gamma) -
\widetilde{F}(\beta,\gamma)|_V \leqslant |\alpha|_V L_1|\alpha
-\beta|_V + B_1|\alpha - \beta|_V + |\alpha|_VL_2|\alpha - \beta|_V
+ B_2|\alpha -\beta|_V .$$  Thus,\begin{equation}
\label{lipestimate} | \widetilde{F} (\alpha,\gamma) -
\widetilde{F}(\beta,\gamma)|_V \leq
K_{\widetilde{F}}(|\alpha|_V)|\alpha-\beta|_V \leq
K_{\widetilde{F}}(C_W)|\alpha-\beta|_V .\end{equation}
\end{enumerate}
\end{proof}

\begin {lemma}\label{E}(Estimates) If $\alpha, \beta \in M(a,b) $, $ t_1, t_2 \in  \mathbb{R}_+$, $\mu_{0} \in  \mathcal{M} _+$
pick constants $ C_1, C_2 $ as follows:
 $C_1 =\mu_{0}(Q) + 2a, C_2 = L_1 + 2bL_2B_1.$  We have the following estimates:
\begin{enumerate}
\item $ {\biggl |\int_{[t_1,t_2]\times Q}
\widetilde{f}_1(\alpha(s)(Q), \hat q)
 \overline{\gamma}_{s,t, \alpha}(\hat q)d\alpha(s)ds\biggr |}
_{V} \leq  2bC_1B_1$.
\item $ |e^{-\int_{t_1}^{t_2} \widetilde{f}_2 (\alpha({\tau})(Q),q)d\tau} -
e^{-\int_{t_1}^{t_2} \widetilde{f}_2(\beta({\tau})(Q),q)d\tau}|\leq $
$\|(\alpha-\beta)\|_{S}L_2 2b$ for all $t_1, t_2 \in I_b({0})
$.

\item ${\biggl |\widetilde{f_1}(\alpha(s; u, \gamma)(Q), \hat
q){\overline{\gamma}}_{s,t,\alpha}( \hat q)-
\widetilde{f_1}(\beta(s; u, \gamma)(Q), \hat q) {\overline{\gamma}
}_{s,t,\beta}( \hat q)\biggr |}_{V} \leq C_2 \|\alpha -\beta\|_{S}.$
\end{enumerate}
\end{lemma}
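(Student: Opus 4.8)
The plan is to reduce all three estimates to three elementary building blocks and then assemble them. The first block is that every weighted kernel $\overline{\gamma}_{s,t,\alpha}(\hat q)$ has total variation at most $1$: since $\gamma(\hat q)\in\mathcal{P}_w(Q)$ is a probability measure and, for $s\le t$, the weight $e^{-\int_s^t \widetilde{f}_2(\alpha(\tau)(Q),q)\,d\tau}$ lies in $(0,1]$ (because $\widetilde{f}_2\ge 0$), one gets $|\overline{\gamma}_{s,t,\alpha}(\hat q)|_V \le \gamma(\hat q)(Q)=1$. The second block is the scalar Lipschitz bound $|e^{-x}-e^{-y}|\le |x-y|$, valid for $x,y\ge 0$. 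The third block is the mass bound: because $\alpha\in M(a,b)$ is nonnegative and takes values in $(\overline{B_{2a}(\mu_0)})_w$, the triangle inequality gives $\alpha(s)(Q)=|\alpha(s)|_V\le \mu_0(Q)+2a=C_1$ for every $s$, and likewise $|(\alpha(\tau)-\beta(\tau))(Q)|\le |\alpha(\tau)-\beta(\tau)|_V\le \|\alpha-\beta\|_S$.

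For item 1, I would bound the total variation of the product-measure integral by passing the norm inside, using the estimate $|\int g\,d\lambda|_V\le \int |g|_V\,d|\lambda|$ supplied by Theorem \ref{int}. The integrand $\widetilde{f}_1(\alpha(s)(Q),\hat q)\overline{\gamma}_{s,t,\alpha}(\hat q)$ has total variation at most $B_1\cdot 1=B_1$, by the bound $B_1$ on $\widetilde{f}_1$ and the first building block, while the total mass of the positive product measure $d\alpha(s)\times ds$ over $[t_1,t_2]\times Q$ is $\int_{t_1}^{t_2}\alpha(s)(Q)\,ds\le 2bC_1$. Multiplying yields $2bC_1B_1$. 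This is the one place that genuinely requires care: the object being estimated is a measure built by integrating a measure-valued map against a product measure, so I must first record that the integrand is weak-star continuous with compact support and lands in a complete convex subset of $\mathcal{M}_w$, exactly as arranged in Section \ref{familiesofmeasures}, so that Theorem \ref{int} applies and the norm may be carried through the vector integral.

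For item 2, I would set $x=\int_{t_1}^{t_2}\widetilde{f}_2(\alpha(\tau)(Q),q)\,d\tau$ and $y=\int_{t_1}^{t_2}\widetilde{f}_2(\beta(\tau)(Q),q)\,d\tau$, apply the scalar Lipschitz bound on $e^{-(\cdot)}$, then move the absolute value inside the time integral and use the $L_2$-Lipschitz continuity of $\widetilde{f}_2$ in its first argument together with the mass bound, obtaining $|x-y|\le L_2\int_{t_1}^{t_2}|(\alpha(\tau)-\beta(\tau))(Q)|\,d\tau\le 2bL_2\|\alpha-\beta\|_S$ for $t_1,t_2\in I_b(0)$.

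For item 3, I would telescope by adding and subtracting $\widetilde{f}_1(\beta(s)(Q),\hat q)\overline{\gamma}_{s,t,\alpha}(\hat q)$. The first resulting term is controlled by the $L_1$-Lipschitz continuity of $\widetilde{f}_1$ (giving $L_1\|\alpha-\beta\|_S$) times the unit total-variation bound on $\overline{\gamma}_{s,t,\alpha}(\hat q)$; the second term is controlled by the $B_1$-bound on $\widetilde{f}_1$ times $|\overline{\gamma}_{s,t,\alpha}(\hat q)-\overline{\gamma}_{s,t,\beta}(\hat q)|_V$, and this last quantity is precisely where item 2 feeds in: writing the difference of kernels as a single integral against the probability measure $\gamma(\hat q)$ and pulling the norm inside bounds it by $\sup_q|e^{-x}-e^{-y}|\le 2bL_2\|\alpha-\beta\|_S$. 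Summing the two contributions gives $(L_1+2bL_2B_1)\|\alpha-\beta\|_S=C_2\|\alpha-\beta\|_S$. Thus everything but item 1 is a short combination of the three building blocks, and the only real obstacle is the vector-integration justification in item 1, which the Appendix apparatus already resolves.
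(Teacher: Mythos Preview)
Your argument is correct and follows essentially the same route as the paper's: the same telescoping in item~3, the same $1$-Lipschitz bound on $e^{-(\cdot)}$ in item~2 (the paper phrases it via the mean value theorem, writing $e^{-\xi}|x-y|\le|x-y|$), and the same ``pass the norm inside the vector integral, then multiply bounds'' strategy in item~1. One small correction: the inequality $\bigl|\int g\,d\lambda\bigr|_V\le\int |g|_V\,d|\lambda|$ is not supplied by Theorem~\ref{int} (which only asserts that the integral lands in $E$) but rather by Theorem~\ref{seminorm}, and even there one must be a little careful because $|\cdot|_V$ is not a \emph{continuous} seminorm on $\mathcal{M}_w$; the paper handles this by using the dual representation $|\nu|_V=\sup_{\|f\|_\infty\le 1}|\langle\nu,f\rangle|$, applying Theorem~\ref{seminorm} to each weak-star continuous seminorm $\rho_f$, and then taking the supremum---exactly the two-step justification your argument implicitly needs.
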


\begin{proof} \begin{enumerate}
\item Recall from \cite[pg. 185]{AliBord} that $\|\nu\|_V= \sup_{\| f \|_\infty \le 1} |< \nu,f>|
$. Initially if $f \in C(Q)$, then we have by using Definition
\ref{integral} $$\begin{array}{lll} && {\biggl
|\int_{[t_1,t_2]\times Q} \widetilde{f}_1(\alpha(s)(Q), \hat q)
 \overline{\gamma}_{s,t, \alpha}(\hat q)d\alpha(s)ds\biggr |}
_{V} \\
&& \hspace{0.8 in} = \sup_{\|f \|_{\infty} \leq
1}|<\int_{[t_1,t_2]\times Q}
\widetilde{f}_1\overline{\gamma}_{s,t, \alpha}, f>| \\
&& \hspace{0.8in}  = \sup_{\|f \|_{\infty} \leq 1}|< \widehat{\Bigl
(\int_{[t_1,t_2]\times Q}
\widetilde{f}_1\overline{\gamma}_{s,t, \alpha}\Bigr )}, f'>| \\
&& \hspace{0.8 in} = \sup_{\|f \|_{\infty} \leq 1} \Bigl
|\int_{[t_1,t_2]\times Q}< \widetilde{f}_1\overline{\gamma}_{s,t,
\alpha}(\hat q), f'>
d\alpha(s) \times ds \Bigr | \\
&& \hspace{0.8in}  \leq 2bC_1 {\sup_{\|f \|_{\infty}\leq 1,(s, \hat
q)\in [t_1,t_2]\times Q} }
 |< \widetilde{f}_1 \overline{\gamma}_{s,t,\alpha}(\hat q),f'>|  \leq 2bC_1B_1, \end{array}$$
 since $$ \begin{array}{l} |< \widetilde{f}_1
 \overline{\gamma}_{s,t,\alpha}(\hat q),f'>| = | \int_{Q} f(q) d (\widetilde{f}_1 \overline{\gamma}_{s,t,\alpha}( \hat
 q))(q)| \\
 \qquad = |\int_{Q}f(q) \widetilde{f}_1(\alpha(s)(Q), \hat q)e^{-\int_{s}^{t} \widetilde{f}_{2}(\alpha(\tau)(Q),q)d\tau}d\gamma(\hat
 q)(q)| \leq B_1 .\end{array}$$

(see  subsection \ref{familiesofmeasures} for the notation $\widehat{\Bigl
(\int_{[t_1,t_2]\times Q}
\widetilde{f}_1\overline{\gamma}_{s,t, \alpha}\Bigr )}$ and $ f^{'}$).

\item There exists $\xi > 0$, such that
 $$ \begin{array}{lll} && \biggl| e^{-\int_{t_1}^{t_2}\tilde
f_{2}(\alpha(\tau)(Q),q)d\tau}-e^{-\int_{t_1}^{t_2}\tilde
f_{2}(\beta(\tau)(Q),q)d\tau}\biggr | \\
&& \quad = e^{-\xi}\biggl |\int_{t_1}^{t_2}
\Bigl[\widetilde{f}_2(\beta(\tau; u, \gamma)(Q),q) -
\widetilde{f}_2(\alpha(\tau; u,\gamma)(Q) , q) \Bigr]d\tau \biggr |
 \\
 && \quad \leq 2bL_2 \|\alpha - \beta \|_{S} .\end{array}$$

\item  For the third estimate we have:
$$\begin{array}{l} {\biggl |\widetilde{f_1}(\alpha(s; u, \gamma)(Q), \hat q){\overline{\gamma}}_{s,t,\alpha}( \hat
q)- \widetilde{f_1}(\beta(s; u, \gamma)(Q), \hat q)
{\overline{\gamma} }_{s,t,\beta}( \hat q)\biggr |}_V \\
\quad  \leq | \widetilde{f_1}(\alpha(s; u, \gamma)(Q), \hat q)-
\widetilde{f_1}(\beta(s; u, \gamma)(Q), \hat q){|\biggl
|{\overline{\gamma}}_{s,t,\alpha}( \hat q)\biggr |}_V \\
\qquad + \widetilde{f_1}(\beta(s; u, \gamma)(Q), \hat q) {\biggr
|{\overline{\gamma}}_{s,t,\alpha}( \hat q)-
{\overline{\gamma}}_{s,t,\beta}( \hat q)\biggr |}_V \\
\quad \leq L_1 |\alpha(s)- \beta(s)|(Q) + 2bL_2B_1\|\alpha - \beta
\|_S \leq (L_1 + 2bB_1 L_2 ) \|\alpha - \beta \|_{S} . \end{array}
$$
\end{enumerate}
\end{proof}

\begin {lemma}\label{FP}(Fixed Point) If $ \mu_{0} \in \mathcal{M}_+ $, let $a>0$,
 $C_1, C_2$ be as in Lemma \ref{E}, with $b$ such that $ (1-e^{-B_2b})\mu_{0}(Q) + 2B_1C_1b <
a $ and $ b < \min\{1, \frac{1}{2L_2C_1 + 2B_1 +2C_2C_1} \}$. Then $
S \colon M(a,b)\to M(a,b) $ given by
\begin{equation}\label{irep1} \begin{array}{l}
[S \alpha](t;u, \gamma) = <\int e^{-\int _{0}^{t}
\widetilde{f}_{2}(\alpha(\tau)(Q),q)d\tau}du(q),\cdot>\\
\hspace{1.3in} + \biggl(\int_{0}^t \int _{Q}
\widetilde{f}_1(\alpha(s)(Q),\hat
q)\overline{\gamma}_{s,t,\alpha}(\hat q) d\alpha(s)(\hat q) ds\biggr
)
\end{array}
\end{equation}
has a unique fixed point.
\end{lemma}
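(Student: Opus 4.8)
The plan is to apply the Banach fixed point theorem on the complete metric space $(M(a,b),\|\cdot\|_S)$. Since $M(a,b)$ was already identified as a nonempty closed subspace of the Banach space $\mathcal{BM}(\mathcal{X})$, it suffices to establish two facts: that $S$ carries $M(a,b)$ into itself, and that $S$ is a strict contraction in $\|\cdot\|_S$ under the stated smallness conditions on $b$. The uniqueness of the fixed point is then automatic.

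Step 1 (self-map). I would verify the four conditions defining $M(a,b)$ for $S\alpha$. Evaluating \eqref{irep1} at $t=0$ collapses both integrals, since the exponent and the outer time integral vanish, giving $[S\alpha](0;u,\gamma)=\langle\int du(q),\cdot\rangle=u$. Nonnegativity is inherited from the positivity of every ingredient: $u\in\mathcal{M}_+$, the survival factor $e^{-\int_0^t\widetilde f_2\,d\tau}>0$, the birth rate $\widetilde f_1\ge 0$, each $\overline\gamma_{s,t,\alpha}(\hat q)$ is a positive (sub-probability) measure, and $d\alpha(s)\ge 0$. For the range condition I would add and subtract $u$: the first term differs from $u$ by the measure $E\mapsto\int_E(e^{-\int_0^t\widetilde f_2\,d\tau}-1)\,du$, whose variation is at most $(1-e^{-B_2b})u(Q)$, while the second term is bounded by $2B_1C_1b$ via Lemma \ref{E}(1); combined with $|u-\mu_0|_V\le a$, the hypothesis $(1-e^{-B_2b})\mu_0(Q)+2B_1C_1b<a$ keeps $[S\alpha](t)$ inside the $2a$-ball. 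Finally, weak* continuity of $t\mapsto[S\alpha](t)$ into $(\overline{B_{2a}(\mu_0)})_w$, together with joint continuity in $(u,\gamma)$, follows from the continuity of the vector-valued integrals furnished by Theorem \ref{int} and dominated convergence applied to the bounded integrands.

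Step 2 (contraction). Writing $[S\alpha](t)-[S\beta](t)$ as a survival difference plus a birth difference, I would bound each term using Lemma \ref{E}. The survival difference $\langle\int(e^{-\int_0^t\widetilde f_2(\alpha)}-e^{-\int_0^t\widetilde f_2(\beta)})\,du,\cdot\rangle$ is controlled in variation by $2bL_2C_1\|\alpha-\beta\|_S$ through Lemma \ref{E}(2) and $u(Q)\le C_1$. The birth difference requires the add-and-subtract decomposition
$$\widetilde f_1(\alpha)\overline\gamma_{s,t,\alpha}\,d\alpha-\widetilde f_1(\beta)\overline\gamma_{s,t,\beta}\,d\beta=\bigl[\widetilde f_1(\alpha)\overline\gamma_{s,t,\alpha}-\widetilde f_1(\beta)\overline\gamma_{s,t,\beta}\bigr]d\alpha+\widetilde f_1(\beta)\overline\gamma_{s,t,\beta}\,d(\alpha-\beta),$$
so that the first piece is handled by Lemma \ref{E}(3) integrated against the positive measure $\alpha(s)$ (mass $\le C_1$) over $s\in[0,b]$, and the second piece by $|\widetilde f_1\overline\gamma|_V\le B_1$ integrated against $d(\alpha-\beta)$. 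Collecting terms yields a Lipschitz constant dominated by $b\,(2L_2C_1+2B_1+2C_2C_1)$, which is strictly less than $1$ precisely because $b<(2L_2C_1+2B_1+2C_2C_1)^{-1}$, so $S$ is a contraction and the Banach fixed point theorem applies.

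The main obstacle is the birth term in Step 2: unlike a scalar Picard iteration, here both the measure-valued integrand $\widetilde f_1\overline\gamma_{s,t,\cdot}$ and the integrating measure $\alpha(s)$ vary with the argument, so the contraction estimate cannot be read off a single Lipschitz constant. The decomposition above isolates these two sources of variation, and it is exactly to bound the combined object $\widetilde f_1\overline\gamma$ (rather than $\widetilde f_1$ and $\overline\gamma$ separately) that Lemma \ref{E}(3) was established; checking that each piece lies in $\mathcal{M}$ and may be controlled in total variation relies on the interpretation of these ``integral of a family of measures'' objects developed in Subsection \ref{familiesofmeasures} and on Theorem \ref{int}.
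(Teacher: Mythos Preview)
Your strategy and your contraction estimate (Step~2) match the paper's, but the range estimate in Step~1 does not close as written. By adding and subtracting $u$ you arrive at $|[S\alpha](t)-\mu_0|_V\le a+(1-e^{-B_2b})\,u(Q)+2B_1C_1b$; however the hypothesis controls $(1-e^{-B_2b})\,\mu_0(Q)$, not $(1-e^{-B_2b})\,u(Q)$, and since $u$ ranges over $\overline{B_{a,+}(\mu_0)}$ the mass $u(Q)$ may exceed $\mu_0(Q)$ by as much as $a$. Plugging in $u(Q)=\mu_0(Q)+a$ your bound becomes $2a+(1-e^{-B_2b})a>2a$, so $S$ is not shown to land in $\overline{B_{2a}(\mu_0)}$. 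The paper avoids this by splitting $du=d(u-\mu_0)+d\mu_0$ \emph{inside} the survival integral: the full exponential (bounded by $1$) acts on $d(u-\mu_0)$, contributing at most $|u-\mu_0|_V\le a$, while the factor $(e^{-\int_0^t\widetilde f_2}-1)$ acts on $d\mu_0$, contributing at most $(1-e^{-B_2b})\,\mu_0(Q)$, which is precisely the quantity the hypothesis bounds.

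A second point: your one-line appeal to Theorem~\ref{int} and dominated convergence for the joint weak$^*$ continuity of $(t,u,\gamma)\mapsto[S\alpha](t;u,\gamma)$ skips what is in fact the longest part of the proof. Theorem~\ref{int} only guarantees that these integrals \emph{lie} in $\mathcal{M}$; it says nothing about continuous dependence on the data. In the paper the difference $[S\alpha]_n-[S\alpha]$ is decomposed into six pieces ($Ia$, $Ib$, $IIa$, $IIb1$, $IIb2$, $IIb3$), each requiring its own argument: uniform convergence of the survival exponential, weak$^*$ convergence $u_n\to u$ tested against a continuous integrand, Lemma~\ref{E}(2) combined with Theorem~\ref{seminorm}, and for the $\gamma_n\to\gamma$ and $\alpha_n\to\alpha$ dependences a pointwise-plus-dominated-convergence argument in the time variable.
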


\begin{proof} Let $\alpha \in M(a,b)$. Now clearly from the form of \eqref{irep1} $[S\alpha](0,u,\gamma)=u$ and $[S\alpha]$ is nonnegative.   If $a$, $b$, $C_1, C_2$ are as in the hypothesis,
then
$$
\begin{array}{lll}
([S\alpha](t,u,\gamma) - \mu_{0}) &=& <\int e^{-\int
_{0}^{t} \widetilde{f}_{2}(\alpha(\tau)(Q),q)d\tau}
d(u-\mu_{0}), \cdot >\\ && \quad
 +   <\int
(e^{-\int _{0}^{t}
\widetilde{f}_{2}(\alpha(\tau)(Q),q)d\tau}-1)
 d\mu_{0}(q), \cdot >\\
 &&\quad +
\biggl(\int_{0}^t \int _{Q} \widetilde{f}_1(\alpha(s)(Q),\hat
q)\overline{\gamma}_{s,t,\alpha}(\hat q) d\alpha(s)(\hat q)\times ds
\biggr )
\end{array}$$ and

$$
\begin{array}{lll}
|[S\alpha] - \mu_{0}|_V & \leq  |u-\mu_{0}|_V +
(1-e^{-B_2b})\mu_{0}(Q)
+2b B_1C_1\\
& \leq a +(1-e^{-B_2b})\mu_{0}(Q) + 2bB_1C_1 < 2a.
\end{array}$$
We now show that $[S\alpha]$  is  continuous. This means that if
$(t_{n}, u_{n},\gamma_{n}) $ is a sequence in $
\overline{I_{b}(0)} \times (\overline{
B_{a,+}(\mu_{0})})_w \times C^{po}$ that converges to $(t, u,
\gamma )\in
 \overline{I_{b}(0)} \times
(\overline{ B_{a,+}(\mu_{0})})_w \times C^{po} $, and if $
[S\alpha]_{n}$ = $[S\alpha](t_{n};
u_{n},\gamma_{n})$ and $[S\alpha] =[S\alpha](t; u,
\gamma )$, then
  $ [S\alpha]_{n} \rightarrow
[S\alpha]$ in the weak$^*$ topology. Let

  $ Ia = <\int
e^{-\int_{0}^{t_{n}} \tilde
f_{2}(\alpha_n(\tau)(Q),q)d\tau}d(u_{n}-u)(q), \cdot>,$

$Ib= <\int[ e^{-\int_{0}^{t_{n}} \tilde
f_{2}(\alpha_{n}(\tau)(Q),q)d\tau}- e^{-\int_{0}^{t}
\tilde f_{2}(\alpha(\tau)(Q),q)d\tau}]du, \cdot>,   $

$ IIa = \biggl( \int_{t}^{t_{n}}\int_Q
\tilde{f_1}(\alpha_{n}(s)(Q),\hat q)
\overline{\gamma}_{s,t_{n},\alpha_{n}}(\hat
q)d\alpha_{n}(\hat q)ds \biggr), $

 $ IIb1 =\biggl (\int_{0}^{t}\int_Q \Bigl [
\tilde{f_1}(\alpha_{n}(s)(Q),\hat
q)-\tilde{f_1}(\alpha(s)(Q), \hat q) \Bigr ]
\overline{\gamma}_{s,t_{n},\alpha_{n}}(\hat
q)d\alpha_{n}(\hat q)ds \biggr),$

$IIb2= \biggl( \int_{0}^{t} \int_Q
\tilde{f_1}(\alpha(s)(Q),\hat
q)[\overline{\gamma}_{s,t_{n},\alpha_{n}}(\hat q) -
\overline{\gamma}_{s,t,\alpha}(\hat q) ] d\alpha_{n}(\hat
q)ds\biggr ),$ and

$IIb3= \biggl ( \int_{0}^{t} \int_Q
\tilde{f_1}(\alpha(s)(Q),\hat q)\overline{\gamma}_{s,t,\alpha}(\hat
q)
 d[\alpha_{n}- \alpha](\hat q)ds\biggl ).$

\noindent Then,  $ ([S\alpha]_{n} - [S \alpha])
 =Ia + Ib +IIa + IIb1 + IIb2 + IIb3$. We
remind the reader that the $weak^*$ topology is generated the family
of seminorms  $ \rho_f(\mu)=|\int_Q f d\mu |$, where $f \in C(Q)$.
So if $\rho_f$ is a seminorm, we need to show that
  $\rho_f([S\alpha_{n}] - [S \alpha])$ is small as
$n \to \infty $. To this end, we provide an estimate for
each of the terms above.
\begin{enumerate}
\item $\rho_f (Ia)$ is small since
$e^{-\int_{0}^{t_{n}} \tilde
f_{2}(\alpha_{n}(\tau)(Q),q)d\tau} \rightarrow
e^{-\int_{0}^{t} \tilde f_{2}(\alpha(\tau)(Q),q)d\tau}$
uniformly in $q$, $e^{-\int_{0}^{t} \tilde
f_{2}(\alpha(\tau)(Q),q)d\tau}$ is continuous in $q$
 and $u_{n} \rightarrow u $ in $ \mathcal{M}_{w} $.

\item The fact that $\rho_f(Ib)$ is small follows from the fact that $e^{-\int_{0}^{t_{n}} \tilde
f_{2}(\alpha_{n}(\tau)(Q),q)d\tau} \rightarrow
e^{-\int_{0}^{t} \tilde f_{2}(\alpha(\tau)(Q),q)d\tau}$
uniformly in $q$.
\item The fact that $\rho_f(IIb1)$ is small follows from the second estimate in Lemma \ref{E} and Theorem \ref{seminorm}.

\item Using Theorem \ref{seminorm} we get
 $$ \begin{array}{lll}\rho_f (IIb2) &\leq&
\int_{0}^t \int_Q \tilde{f_1}(\alpha(s)(Q),\hat
q)\rho_f[\overline{\gamma}_{s,t_{n},\alpha_{n}}(\hat
q) - \overline{\gamma}_{s,t,\alpha}(\hat q) ]
d|\alpha_{n}|(\hat q)ds.
\end{array}$$
Since $e^{-\int_{s}^{t_{n}} \tilde
f_{2}(\alpha_{n}(\tau)(Q),q)d\tau} \rightarrow
e^{-\int_{s}^{t} \tilde f_{2}(\alpha(\tau)(Q),q)d\tau}$ uniformly in
$(s,q)$, then $
\rho_f[\overline{\gamma}_{s,t_{n},\alpha_{n}}(\hat q)
- \overline{\gamma}_{s,t,\alpha}(\hat q) ] \rightarrow 0$  uniformly
in $(s, \hat q)$ as $n \to \infty$. Thus, our result is
immediate.
\item  For the term $IIb3$ we have
$$\begin{array}{l} \rho_f (IIb3) = \\
\quad  | \int_{0}^{t}\int_Q \tilde{f_1}(\alpha(s)(Q),\hat
q)\int_Q f(q)e^{-\int_{s}^{t} \tilde
f_{2}(\alpha(\tau)(Q),q)d\tau}d\gamma(\hat q)(q)
 d[\alpha_{n}- \alpha](\hat q)ds|. \\
 \end{array}$$

If $g_{n}(s) =\int_Q \tilde{f_1}(\alpha(s)(Q),\hat q)\int_Q
f(q)e^{-\int_{s}^{t} \tilde
f_{2}(\alpha(\tau)(Q),q)d\tau}d\gamma(\hat q)(q)
 d[\alpha_{n}- \alpha](\hat q) $, then $g_{n} \rightarrow 0$  pointwise.
 Hence our result follows by dominated convergence and the facts that $\tilde{f_1}(\alpha(s)(Q),\hat
q)\int_Q f(q)e^{-\int_{s}^{t} \tilde
f_{2}(\alpha(\tau)(Q),q)d\tau}d\gamma(\hat q)(q)$ is continuous and
$ \alpha_n \rightarrow \alpha$.

\item
By hypothesis $t_{n}\rightarrow t$, hence, the term
$\rho_f(IIa)$ is small since the integrands are bounded.
\end{enumerate}

Now for the contraction we have the following. If
$$
\begin{array}{lll}
I & = & <\int\Bigl ( e^{-\int _{0}^{t}
\tilde{f_{2}}(\alpha(\tau)(Q),q)d\tau} - e^{-\int _{0}^{t}
\tilde{f_{2}}(\beta(\tau)(Q),q)d\tau} \Bigr )du(q), \cdot>,
\\II & = &
 \biggl (\int_{0} ^t  \int_Q \widetilde{f_1}(\alpha(s)(Q),\hat
q)\overline{\gamma}_{s,t,\alpha}(\hat q) d( \alpha - \beta )(s)(\hat
q)ds\biggr ),
\\III & = &
\biggl ( \int_{0} ^t \int_{Q} \Bigl\{
\widetilde{f_1}(\alpha(s)(Q),\hat q)\overline{\gamma}_{s,t,
\alpha}(\hat q)- \widetilde{f_1}(\beta(s)(Q),\hat
q)\overline{\gamma}_{s,t,\beta}(\hat q ) \Bigr \} d\beta(s)(\hat q)
ds\biggr ),
\end{array}
$$
then $([S\alpha] -[S\beta]) = I + II + III$, and  $|[S\alpha]
-[S\beta]|_V$ $\leq |I|_V + |II|_V + |III|_V$ $ \leq (2bL_2 C_1  +
2bB_1 + 2bC_1C_2) \|\alpha - \beta \|_S .$ Hence, $S$ is a
contraction mapping. Therefore, $S$ has a unique fixed point in
$M(a,b)$.
\end{proof}

We will denote this fixed point by $\widetilde{\varphi}_{a}.$

\begin{proposition}\label{LS}(Local Solution) If $  \mu_{0} \in  \mathcal{M}_+$, and $b$ is as in Lemma
\ref{FP}, then
\begin{description}
\item 1. the function $\widetilde \varphi_a$ satisfies
\begin {equation} \label{IREP}
 \begin{array}{l} {\widetilde{\varphi}}_{a}(t;u,\gamma)=<\int e^{-\int
_{0}^{t} \tilde
{f_{2}}(\widetilde{\varphi}_{a}(\tau)(Q),q)d\tau}du(q), \cdot> \\
\hspace{0.8 in} + \left (\int _{[0, t]\times Q}
\widetilde{f}_1(\widetilde{\varphi}_{a}(s;u,\gamma)(Q),\hat
q)\overline{\gamma}_{s,t,\widetilde{ \varphi}}
d\widetilde{\varphi}_{a}(s)(\hat q)\times ds \right)
\end{array} \end{equation}
\item and is a local solution to
\begin {equation}\label{SOL} \left\{
\begin{array}{l}
  \dot{x}(t) = \widetilde{F} (x(t), \gamma) \\
  \quad =
\biggl (\int_Q \widetilde{f_1}(x(t)(Q),\hat q)\gamma (\hat q)
dx(t)(\hat q) \biggr )- <\int \widetilde{f_{2}}(x(t)(Q),\hat
q)dx(t) (\hat q),\cdot>
  \\
x(0)=u.
\end{array}
\right.
\end{equation}
\item 2. $\widetilde{\varphi}_{a}$ is nonnegative and continuous.
\end{description}
\end{proposition}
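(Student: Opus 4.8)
The plan is to extract the integral representation \eqref{IREP} for free from the fixed-point identity, and then to differentiate that representation in the total-variation norm so as to recover the vector field $\widetilde{F}$. The first assertion of part~1 is immediate: substituting $\alpha=\widetilde{\varphi}_a$ into the defining formula \eqref{irep1} for $S$ and invoking $S\widetilde{\varphi}_a=\widetilde{\varphi}_a$ from Lemma \ref{FP} gives \eqref{IREP} verbatim, where $\overline{\gamma}_{s,t,\widetilde{\varphi}_a}$ is the family of measures built from the integrating factor $e^{-\int_s^t \widetilde{f}_2(\widetilde{\varphi}_a(\tau)(Q),q)\,d\tau}$. The substantive content is that \eqref{IREP} is a total-variation-differentiable solution of \eqref{SOL}.

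First I would record the regularity that legitimizes differentiation. Writing $X(s)=\widetilde{\varphi}_a(s)(Q)$, weak$^*$ continuity of $\widetilde{\varphi}_a$ (membership in $M(a,b)$), tested against the constant function $\mathbf{1}$, shows $s\mapsto X(s)$ is continuous and, by the construction of $M(a,b)$, bounded by $C_1$. Since $\widetilde{f}_2$ is bounded and Lipschitz in its first argument uniformly in $q$, the map $\tau\mapsto \widetilde{f}_2(X(\tau),q)$ is continuous uniformly in $q$, so $t\mapsto e^{-\int_s^t \widetilde{f}_2(X(\tau),q)\,d\tau}$ is $C^1$ with $t$-derivative $-\widetilde{f}_2(X(t),q)\,e^{-\int_s^t \widetilde{f}_2(X(\tau),q)\,d\tau}$, all estimates uniform in $q$. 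This uniformity is exactly what converts pointwise-in-$q$ bounds into total-variation bounds through $\|\nu\|_V=\sup_{\|f\|_\infty\le 1}|<\nu,f>|$.

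Next I would split \eqref{IREP} as $\widetilde{\varphi}_a(t)=A(t)+B(t)$, the initial-datum term and the accumulated-birth term, and differentiate each in total variation. For $A(t)$, the difference quotient converges to $-<\int \widetilde{f}_2(X(t),q)e^{-\int_0^t \widetilde{f}_2\,d\tau}\,du(q),\cdot>$, the mortality acting on $A(t)$, by uniform differentiability of the integrating factor. For $B(t)$ I would write $B(t+h)-B(t)$ as a new increment $\int_t^{t+h}\!\int_Q \widetilde{f}_1\,\overline{\gamma}_{s,t+h,\widetilde{\varphi}_a}\,d\widetilde{\varphi}_a(s)\,ds$ plus the change in the old part $\int_0^t\!\int_Q \widetilde{f}_1[\overline{\gamma}_{s,t+h,\widetilde{\varphi}_a}-\overline{\gamma}_{s,t,\widetilde{\varphi}_a}]\,d\widetilde{\varphi}_a(s)\,ds$. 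Dividing by $h$ and letting $h\to 0$, the first piece tends to the birth measure $\int_Q \widetilde{f}_1(X(t),\hat q)\gamma(\hat q)\,d\widetilde{\varphi}_a(t)(\hat q)$ (using $\overline{\gamma}_{t,t,\widetilde{\varphi}_a}=\gamma$ and continuity in $s$ of the integrand near $s=t$), while the second tends to $-\widetilde{f}_2(X(t),\cdot)\,B(t)$ by the $t$-derivative of the integrating factor above. Summing, $\dot{\widetilde{\varphi}}_a(t)=[\text{birth at }t]-\widetilde{f}_2(X(t),\cdot)(A(t)+B(t))=\widetilde{F}(\widetilde{\varphi}_a(t),\gamma)$, with all limits taken in total variation, which is precisely \eqref{SOL}.

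Finally, part~2 is largely inherited: the fixed point lies in $M(a,b)$, whose elements are by definition nonnegative and (jointly) weak$^*$ continuous, giving both nonnegativity and continuity of $\widetilde{\varphi}_a$, with the stronger total-variation continuity following from the differentiability just shown. I expect the main obstacle to be the $B(t)$ computation, specifically justifying the interchange of the $h\to 0$ limit with the double integral and pushing the uniform-in-$(s,\hat q)$ convergence of $\tfrac{1}{h}[\overline{\gamma}_{s,t+h,\widetilde{\varphi}_a}-\overline{\gamma}_{s,t,\widetilde{\varphi}_a}]$ through to a \emph{total-variation} limit rather than a merely weak$^*$ one. The estimates of Lemma \ref{E} together with dominated convergence are the tools I would lean on to control this uniformly.
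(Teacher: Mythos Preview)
Your proposal is correct and follows essentially the same route as the paper: split \eqref{IREP} into the initial-datum piece and the accumulated-birth piece (the paper's $\mu_1,\mu_2$ are your $A,B$), differentiate each, and recognize that the Leibniz-type computation on the birth piece produces the instantaneous birth term from the boundary contribution and the mortality acting on $B$ from the $t$-derivative of the integrating factor; part~2 is read off from membership in $M(a,b)$. The only stylistic difference is that the paper tests against a fixed $f\in C(Q)$ and invokes the scalar differentiation-under-the-integral theorem (Theorem~\ref{DUI}) to compute $\dot\mu_2(f)$, whereas you work directly with difference quotients in the total-variation norm; your version is in fact a bit more explicit about why the derivative exists in $\|\cdot\|_V$ and not merely weak$^*$, which the paper's computation leaves implicit.
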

\begin{proof}
\begin{enumerate}
\item
We differentiate the integral
representation \eqref{IREP} and show that it satisfies \eqref{SOL}.
Then we use uniqueness of solution given that we have Lipschitzicity
by Lemma \ref{LF}. If $\widetilde{\varphi}_a =\mu_1 + \mu_2$, then
$\dot{\widetilde{\varphi}}_a =\dot{\mu_1} + \dot{\mu_2}$, where $$
\begin{array}{lll}  \mu_1 &=&<\int e^{-\int _{0}^{t} \tilde
{f_{2}}(\widetilde{\varphi}_{a}(\tau)(Q),q)d\tau}du, \cdot>
\end{array}$$
and
$$\begin{array}{lll}
 \mu_2&=& \int_{[0, t]\times Q}
\widetilde{f}_1(\widetilde{\varphi}_{a}(s)(Q),\hat q
)\overline{\gamma}_{s,t, \widetilde{\varphi}}(\hat q)
d\widetilde{\varphi}_{a}(s)(\hat q)\times ds . \end{array}$$

Clearly $$ \dot{\mu}_1 = <\int
-\widetilde{f}_2(\widetilde{\varphi}_a(t)(Q),q) d\mu_1(q), \cdot>.$$  Since
$${\mu_2}(f) = \int_{0}^t \biggl[ \int_Q
\widetilde{f}_1(\widetilde{\varphi}_a(s)(Q), \hat q)\int_Q
f(q)e^{-\int _{s}^{t} \tilde
{f_{2}}(\widetilde{\varphi}_{a}(\tau)(Q),q)d\tau} d\gamma(\hat
q)(q)d\widetilde{\varphi}_a\biggr] ds,$$

then $$ \begin{array}{lll} \dot{\mu}_2(f) &=& \int_{0}^t\biggl
[ \int_Q \widetilde{f}_1(\widetilde{\varphi}_a(s)(Q), \hat q)\int_Q
f(q) (-
\widetilde{f}_2(\widetilde{\varphi}_a(t)(Q),q))d\gamma_{s,t,\widetilde{\varphi}_a}(\hat
q)(q) d\widetilde{\varphi}_a \biggr ] ds\\
&& +
 \int_Q \widetilde{f}_1( \widetilde{\varphi}_a(t)(Q),\hat q)\int_Q
f(q)d\gamma(\hat q)(q) d\widetilde{\varphi}_a(t)(\hat q)\\
& =& \mu_2 (-\widetilde{f}_2(\widetilde{\varphi}_a(t)(Q), \cdot)f ) +
\biggl (\int_Q \widetilde{f}_1( \widetilde{\varphi}_a(t)(Q),\hat
q)\gamma(\hat q)
d\widetilde{\varphi}_a(t)(\hat q) \biggl )(f)\\
 & =& \hspace{-.1in}<\int- \widetilde{f}_2(\widetilde{\varphi}_a(t)(Q),\cdot)d\mu_2,f> +
\biggl (\int_Q \widetilde{f}_1( \widetilde{\varphi}_a(t)(Q),\hat
q)\gamma(\hat q) d\widetilde{\varphi}_a(t)(\hat q) \biggl
)(f).\end{array}$$

Hence, $$ \dot{\widetilde{\varphi}_a}= \biggl (\int_Q
\widetilde{f_1}(\widetilde{\varphi}_a(t)(Q),\hat q)\gamma (\hat q)
d\widetilde{\varphi}_a(t)(\hat q) \biggr )- <\int
\widetilde{f_{2}}(\widetilde{\varphi}_a(t)(Q),q
)d\widetilde{\varphi}_a(t), \cdot>.$$

\item This follows from Lemma \ref{FP}.
\end{enumerate}
\end{proof}

For fixed $u, \gamma $ we denote this local solution to \eqref{SOL}
by $ \widetilde{\mu}_a$, i.e., $\widetilde{\mu}_{a}(t)=\widetilde\varphi _a
(t;u,\gamma)$. Since $\widetilde{\varphi}_{a}$ is nonnegative, we
see that $\widetilde{\mu}_{a} $ is nonnegative.

\subsubsection{Proof of Theorem \ref{main}}

Let $ a>0$, by Proposition \ref{LS} we see that the dynamical
system, $\widetilde{\varphi}_a$, exists on a small interval
$I_b(0)$ . Since $\widetilde{\varphi}_a \in B_{2a}( \mu_{0})$,
then $\widetilde{\varphi}_{a}( t;u, \gamma)(Q)  < \widetilde{K}$ and $\widetilde{F}(\widetilde{\varphi}_{a}( t;u, \gamma),\gamma)=  F(\widetilde{\varphi}_{a}( t;u, \gamma),\gamma)$ on $I_b(0)$. Hence, equation
\eqref{M} has the local solution $\widetilde{\varphi}_a$ on $I_b(0)$. This means by Lemma \ref{LS} that for fixed $u$ and $\gamma$,
 $\widetilde{\varphi}_a(\cdot,u,\gamma): I_b(0) \to
{\cal{M}}_{V,+}$ is continuously differentiable and satisfies \eqref{M}. We will denote this solution as
$\mu_a$ and the dynamical system as $\varphi_a$.

Moreover,  from the nonnegativity of the local solution to \eqref{M}, $\mu_a$, and the nonincreasing property of $f_1$ with respect to $X$ given in assumption (A1), it is easy to show that this solution satisfies
$\dot \mu_a (t)(Q) \le M_{f_1} \mu_{a}(t) (Q)$, where $M_{f_1}=\max_{q \in Q} f_1(0,q)$. Hence, if we let $g(t,s) =M_{f_1}s$, then using Theorem \ref{GlobalEx} we see that $\mu_a $ can
be extended to all of $\mathbb{R}_+$.

Hence $\mu_a$ is a nonnegative global solution to \eqref{M} for
initial measures in a variation bounded set. On any interval $J$, if
$\mu_a $ is a solution to \eqref{M}, then the set $\{ \mu_a(t)  : t
\in J \}$ is a bounded set in total variation. Hence we can use
Lemma \ref{LF} along with the Gronwall inequality to show that this
solution is unique.

Since $\vec{\textbf{0}} \in \mathcal{M}_+ $,
and $\mathbb{R}_+ \times (\mathcal{M}_+)_w \times C^{po}$ = $
\bigcup_{N \in \mathbb{Z}_+}  \mathbb{R}_+ \times
(\overline{B_{N,+}(\vec{\textbf{0}})})_w \times C^{po}, $ we let
$\varphi = \bigcup_{N \in \mathbb{Z}_+} \varphi_N $ and Theorem
\ref{main} is immediate.

\section{ \bf Reduction to Special Cases}\label{Reduction}
 Selection
and mutation models have been considered on discrete
 strategy/trait spaces \cite{AA1,AzmyShu,BT} and continuous strategy/trait
spaces \cite{calsina,CALCAD,SES}. In this section we demonstrate the unifying power of the measure theoretic formulation.  In particular, we present the
correct choices of initial measure $u$ and the selection-mutation
kernel $\gamma (\hat q)$ such that the model \eqref{M}  reduces to
each of the cases of interest. Given that our model is nonnegative, we can use Theorem \ref{Pos} and write our model using set function notation.
\label{Reduction}
\begin{enumerate}
\item {\it Reduction to pure selection model}: Let $\gamma(\hat q) =\delta_{\hat q}$ and $u\in
\mathcal{M}_+.$ Substituting these parameters in \eqref{M} one
obtains the pure selection model
\begin {equation} \left\{\begin{array}{ll}\label{selection}
 \displaystyle \frac{d}{dt}{\mu}(t;u, \gamma)(E) = \int_E \left ({f}_1(\mu(t)(Q), \hat q)-{f}_{2}(\mu(t)(Q),\hat q)
\right )d\mu(t)(\hat q) \\
\mu(0;u,\gamma)=u.
\end{array}\right.\end{equation}

\item {\it Reduction to density model}:  Let $Q \subset {\rm int}(\mathbb{R}_+^n)$ and $\gamma(\hat q), u \in L_1(Q, \nu)$, i.e, both are absolutely
continuous with respect to a measure $\nu$.
 If $d\gamma(\hat q)= P(q, \hat q) d\nu(q)$ and $ du= c_u(q) d\nu(q) $, then substituting these
expressions into  \eqref{IREP} and using Fubini's theorem we see
that there exists $c_0(t,q), f(t, q) \in L^1(Q, \nu)$ such that
$d\varphi(t;u,\gamma) = c_0(t,q)d\nu(q) + f(t,q)d\nu(q)= (c_0(t,q)+
f(t,q))d\nu(q)$. Hence if $\nu =dq$, there exists $x_{u,\gamma}(t,q)
\in L^1( \mathbb{R}_+\times Q, dq) _+$ such that $d\mu(t)
=x_{u,\gamma}(t,q) dq.$ By Fubini's theorem, equation \eqref{M}
becomes
$$\begin{array}{lll} \dot{\mu}(E) &= &\int_E
\dot{x}_{u,\gamma}(t,q)dq\\
& = &  \hspace{-0.1in}\int_E \Bigl [ \int_Q f_1(\mu(t) (Q), \hat
q)P(q, \hat q)x_{u,\gamma}(t,\hat q)d \hat q -f_2( \mu(t)(Q), q)
x_{u,\gamma}(t, q)\Bigr ]dq,
\end{array}$$ for all $E \in \mathcal{B}(Q).$
 Hence
\begin {equation} \left\{
\begin{array}{ll}
\dot{x}_{u,\gamma}(t,q) = \int_Q f_1(\mu(t) (Q), \hat q)P(q, \hat
q)x_ {u,\gamma}(t,\hat q)d \hat q -f_2(
\mu(t)(Q), q) x_{u,\gamma}( t,q)\\
x_{u,\gamma}(0, q)=x_u( q). \end{array}\right.
\end{equation}
This is the density replicator-mutator model \eqref{dsmm}.

\item {\it Reduction to discrete model}: Assume that
$ \gamma, u $ are both discrete, i.e., their support is countable
and consists of isolated points. For $\gamma$ this means that there
is a discrete set $\Lambda$ which contains the support of
$\gamma(\hat q)$ for all $\hat q$. Assume there exists a sectionwise
continuous function $P(q,\hat q)$, and a family of measures $\nu(q)$
all having the same discrete support $\Lambda$ such that
$d\gamma(\hat q)= P(q, \hat q) d\nu(q)$. Then if we substitute these
expressions into equation \eqref{IREP} and use Fubini's theorem we
see that there exists $c_u(t,q), c_\nu(t, q) \in C(\mathbb{R}_+\times
Q) _+$ such that $d\varphi(t;u,\gamma) = c_u(t,q)du(q) +
c_\nu(t,q)d\nu(q)$. Hence, equation \eqref{M} becomes
\begin {equation} \left\{
\begin{array}{ll}
\dot{\mu}= \int_E \Bigl [ \int_{Q} f_{1}(\mu(t)(Q),\hat q) P(q,\hat
q)c_u(t,\hat q) du(\hat q)
\\ \qquad +  \int_Q f_{1}(\mu(Q),\hat q)P(q, \hat q)c_{\nu}(t,\hat q)
d\nu(\hat q) \Bigr ]d\nu (q)
  \\ \qquad -\int_E f_{2}(\mu(Q), q)c_{\nu}(t, q) d\nu(q) -\int_E f_{2}(\mu(Q), q)c_u(t, q)
  du(q) \\
\mu(0)=u .
\end{array}
\right.
\end{equation}

 If  $E =\{q_i \} $ and $\text{supp} (\nu)$ denotes the support of the $\nu$,
 the above becomes a discrete system given by
\begin {equation}\label{discrete1} \left\{
\begin{array}{l}
\dot{\mu}(\{q_i\})= \sum _{\hat q_j \in \text{supp}{\nu}}
f_{1}(\mu(t)(Q), \hat q_j)P(q_i,\hat q_j)[ c_u(t, \hat q_j)+
c_\nu(t, \hat q_j] \\
\qquad  \quad - f_{2}(\mu(t)(Q),  q_i)[c_u(t, q_i) + c_\nu(t, q_i)]
  \\
\mu(0)(\{q_i\})= c_u(0,q_i) .
\end{array}
\right.
\end{equation}

For example, if $N=2,...$ $ \text{supp} (u)  =\{ q_{i}\}_{i=1}^{N}$,
$q= (a(q),b(q))$, $x_{i} =\mu(\{q_{i}\})$,
$f_{1}(X,q)=a(q),f_{2}(X,q)=b(q)X, \mu(Q)=X$, then through a
suitable change of variable ($ y_i= a_ix_i $) equation
\eqref{discrete1} reduces to the exact differential equation system
studied in \cite{AzmyShu}.
\item Many authors in EGT theory assume that $f_1$ is a fitness
function, $f_2(X,q) =\int_Q f_1(X,q)d\mu$ is an average fitness, and
$\mu$ is a probability measure. The models are mostly on
$\mathbb{R}_+^n$ and the n-simplex is invariant. From our assumptions
we can incorporate a version of this also by using the companion IVP
to \eqref{M},
 $$P( t,u, \gamma)(E) =
\frac{{\mu}(t)(E)} {{\mu}(t)(Q)}.$$  It has the dynamics
\begin{equation} \label{MM}
\begin{array}{l}
\frac{d}{dt}{P}(t; u, \gamma)(E)=  \int_{Q} \Bigl
[f_1(\mu(t)(Q), \hat q) \gamma( \hat q) (E) \\
\hspace{1.6in} -
\Bigl(\int_Q f_1(\mu(t)(Q), q)dP(t)\Bigr ) P(t)(E)\Bigr ] dP(t)(\hat q) \\
 \hspace{1.4in} - \int_E [f_2(\mu(t)(Q), \hat q) - \int_Q f_2(\mu(t)(Q), q) dP(t)]
dP(t)(\hat q).
\end{array}\end{equation}
\item {\it Reduction to Density Dependent Replicator Equation}:
Define $$f(\mu(Q),q) = f_1(\mu(Q),q)-f_2(\mu(Q),q).$$ If
$\pi(dq,\mu) = F(\mu)(dq) = f(\mu(Q),q)\mu(dq) $, then \eqref{MM}
becomes
$$ \dot{P}(t)(E)=\int_E [f(X,q)-\overline{f}(X,q)]dP(q), $$ where $\overline{ f}=\int_Q f
d\mu$. This is exactly the density dependent Replicator equation.

\item {\it Reduction to Density Dependent Quasi-species Equation}:
Likewise if we interpret $f_1(\mu(Q),q)$ as a net fitness and
$f_2(\mu)= \int_Q f_1(\mu(Q),q)d\mu$ as the average fitness, then
\eqref{M} becomes the Density Dependent Replicator-Mutator equation.

\end{enumerate}

\section{ \bf Concluding Remarks}We have formulated a density dependent EGT (selection-mutation)
 model on the space of measures and provided a framework which is rich
 enough to allow pure selection, selection-mutation, and discrete and continuous strategy spaces,
 all under one setting.
 We also established the well-posedness of this EGT model.

There are several future paths to take from this point. We will
mention one application and one  mathematical future pathway.
Modeling tumor growth, cancer therapy and viral evolution are
immediate applications. For example, tumor heterogeneity is one main
cause of tumor robustness. Tumors are robust in the sense that
tumors are systems that tend to maintain stable functioning despite
various perturbations. While tumor heterogeneity describes the
existence of distinct subpopulations of tumor cells with specific
characteristics within a single neoplasm. The mutation between the
subpopulations is one major factor that makes the tumor robust. To
date there is no unifying framework in mathematical modeling of
carcinogenesis that would account for parametric heterogeneity
\cite{karev}. To introduce distributed parameters (heterogeneity)
and mutation is essential as we know that cancer recurrence, tumor
dormancy and other dynamics can appear in heterogeneous settings and
not in homogeneous settings. Increasing technological sophistication
has led to a resurgence of using oncolytic viruses in cancer
therapy. So in formulating a cancer therapy it is useful to know
that in principle
 \emph{a heterogeneous oncolytic virus must be used to eradicate a tumor cell.}

 One mathematical future path is to perform asymptotic analysis on the model. There are two essential things that need to be addressed if we wish to be able to perform asymptotic analysis of our model. First, we need a state space with the  property that if the
  measure valued dynamical system has an initial condition as a finite signed Borel measure then the asymptotic
  limits will also be in this space.  The second problem is that often there will be more than one strategy of a given fitness.
  In \eqref{logiseq}, a Dirac mass emerged as it is assumed that only a unique fittest class exists.
  In reality, this may not be the case and more than one fittest class can exist. In particular, it is possible that a
   continuum of fittest strategies exist (see Figure 1 for an example).  So our mathematical structure must include the ability
   to demonstrate the convergence of the model solution to a measure supported on a continuum of strategies.

  These two difficulties  coupled with our desire to study the problem of parameter estimation in these models imply that some
   form a ``weak" or ``generalized"  asymptotic limit must be formulated.  These weak limits need to live in
   a certain ``completion" of the space of finite signed measures. We will explore these topics in a forthcoming study.

\begin{center}
{\bf Insert Figure 1 Here}
\end{center}


 \vspace{0.0 in}
 \noindent {\bf Acknowledgements:} The authors would like to thank Horst
 Thieme for thorough reading of an earlier version of this
 manuscript and for the many useful comments. The authors would also like to thank their colleague Ping Ng
 for helpful discussions. This work was partially supported by the National Science Foundation under grant \# DMS-0718465.

\section{ \bf Appendix}

 For the convenience of the reader, we next state a few known results that are used in our analysis.
\begin{theorem}\label{DUI}(Differentiation Under Integral)
If $ a\leqslant \alpha \leqslant b$, let
$$\phi(\alpha)=\int_{u_{1}(\alpha)}^{u_{2}(\alpha)}f(x,\alpha)dx. $$
Then
$\phi_{\alpha}=\int_{u_{1}(\alpha)}^{u_{2}(\alpha)}f_{\alpha}(x,\alpha)dx
+f(u_{2},\alpha)u_{2,\alpha}-f(u_{1},\alpha)u_{1,\alpha}$ provided
$f, f_{\alpha}$ are continuous in some region containing $ \{x|
u_{1} \leqslant x \leqslant u_{2}\}\times (a,b)$ and $ u_{1}, u_{2}
\in C^{1}(a,b) $.
\end{theorem}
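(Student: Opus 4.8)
The plan is to regard $\phi$ as the composition of a function of three independent variables with the $C^1$ curve $\alpha \mapsto (u_1(\alpha), u_2(\alpha), \alpha)$, and then invoke the multivariable chain rule. Define
$$ G(s,t,\alpha) = \int_{s}^{t} f(x,\alpha)\, dx, $$
so that $\phi(\alpha) = G(u_1(\alpha), u_2(\alpha), \alpha)$. The strategy is to show that $G$ is continuously differentiable in all three arguments on the relevant region and to compute its three partial derivatives separately. The two partials with respect to the limits are immediate from the Fundamental Theorem of Calculus: since $f(\cdot,\alpha)$ is continuous, $\partial G/\partial t = f(t,\alpha)$ and $\partial G/\partial s = -f(s,\alpha)$.

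The hard part will be the partial with respect to $\alpha$ with the limits held fixed, i.e. showing
$$ \frac{\partial G}{\partial \alpha}(s,t,\alpha) = \int_{s}^{t} f_{\alpha}(x,\alpha)\, dx. $$
To establish this I would form the difference quotient
$$ \frac{G(s,t,\alpha+h) - G(s,t,\alpha)}{h} = \int_{s}^{t} \frac{f(x,\alpha+h) - f(x,\alpha)}{h}\, dx, $$
apply the Mean Value Theorem in the $\alpha$ slot to rewrite the integrand as $f_{\alpha}(x, \alpha + \theta(x,h)\,h)$ for some $\theta(x,h) \in (0,1)$, and then pass to the limit $h \to 0$ under the integral sign. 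The justification for interchanging limit and integral is the main obstacle: since $f_{\alpha}$ is continuous on a compact region of the form $[s,t] \times [\alpha-\delta,\alpha+\delta]$, it is uniformly continuous there, so $|f_{\alpha}(x,\alpha+\theta h) - f_{\alpha}(x,\alpha)|$ is bounded by a quantity independent of $x$ that tends to $0$ with $h$; integrating this estimate over $[s,t]$ shows the difference quotient converges to $\int_s^t f_\alpha(x,\alpha)\,dx$. Alternatively, the same interchange follows from the dominated convergence theorem, using continuity of $f_\alpha$ on the compact region to supply the integrable bound.

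With the three partials in hand, continuity of $\partial G/\partial s$ and $\partial G/\partial t$ follows from continuity of $f$, and continuity of $\partial G/\partial \alpha$ from continuity of $f_{\alpha}$ together with the uniform estimate above; hence $G$ is $C^1$ and therefore totally differentiable. Finally I would apply the chain rule to $\phi(\alpha) = G(u_1(\alpha),u_2(\alpha),\alpha)$, using $u_1, u_2 \in C^1(a,b)$, to obtain
$$ \phi_{\alpha} = \frac{\partial G}{\partial s}\,u_{1,\alpha} + \frac{\partial G}{\partial t}\,u_{2,\alpha} + \frac{\partial G}{\partial \alpha} = -f(u_1,\alpha)\,u_{1,\alpha} + f(u_2,\alpha)\,u_{2,\alpha} + \int_{u_1}^{u_2} f_{\alpha}(x,\alpha)\, dx, $$
which is exactly the asserted formula.
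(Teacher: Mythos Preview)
Your proof is correct and follows the standard route for the Leibniz rule: introduce the auxiliary function $G(s,t,\alpha)$, compute its three partials via the Fundamental Theorem of Calculus and a uniform-continuity argument for the $\alpha$-partial, verify $G$ is $C^1$, and conclude by the chain rule. There is nothing to fault in the argument.

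However, there is no proof in the paper to compare against. Theorem~\ref{DUI} appears in the Appendix under the heading ``For the convenience of the reader, we next state a few known results that are used in our analysis,'' and is simply quoted without proof as a classical fact (the paper's bibliography points to Lang's \emph{Undergraduate Analysis} for background of this type). So your proposal supplies a proof where the paper deliberately omits one; the approach you chose is the textbook one and would be entirely appropriate as a supporting proof if one were required.
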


\begin{definition} \label{duality}\cite[pg. 151]{AliBord}
 A dual pair or a dual system is a pair $<X, X'>$ of vector spaces over a field $F$
together with a function $(x, x') \mapsto <x,x'> \in ~F$ satisfying
the following:

\begin{enumerate}
\item The map $x \mapsto <x,x'> $ is linear for each $x'$.
\item If $<x,x'>=0$ for each $x'$, then $x=0$.
\item The map $x' \mapsto <x,x'> $ is linear for each $x$.
\item If $<x,x'>=0$ for each $x$, then $x'=0$.
\end{enumerate}
\end{definition}
Each space of a dual pair $<X,X'> $ can be interpreted as a set of
linear functionals on the other. For instance, each $x \in X$
defines the linear functional $ x'\mapsto <x,x'>$. If $A\subseteq
X$, then it is called $X' - bounded$ if $sup_{x \in A}|<x,x'>| $ is
bounded for every $x' \in X'$.  For each $X'-bounded$ subset
$A\subseteq X$ we define the semi-norm  on $X'$ $$ p_A(x')=\sup_{x
\in A}|<x,x'>|.
$$

If $\ss$ is a system of $X'-bounded$ subsets, the family $\{p_A | A
\in \ss \}$ generates a Hausdorf locally convex topology called the
$\ss$ topology. A net $(x'_{\alpha})$ converges to $x'$ iff $p_{A}(
x_{\alpha}'-x') \rightarrow 0 $ for all $ A \in \ss$. If $\ss$
consists of singletons, then it is called the weak* topology on $X'$
and is often also denoted as $\sigma(X',X)$.

\begin{theorem}\label{Duality}\cite[pg. 153]{AliBord}
(Duality pairs are weakly dual) Let $X,Y$ be topological vector
spaces over a field $F$ forming a dual pair. The topological dual of $( X, \sigma(X,Y))$
is $Y$. Similarly $(Y, \sigma(Y,X)'= X$. Here $\sigma(X,Y)$ ( $
\sigma(Y,X)$), denotes the weak topology on $X$ generated by the
family of linear functionals $\{<\cdot, y>\}_{y \in Y}$. Also $X'$
is the notation used for the continuous dual of $X$.
\end{theorem}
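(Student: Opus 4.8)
The plan is to prove the two inclusions $Y \subseteq (X,\sigma(X,Y))'$ and $(X,\sigma(X,Y))' \subseteq Y$, after which the companion statement $(Y,\sigma(Y,X))' = X$ follows at once by the symmetry of the dual-pair axioms in Definition \ref{duality}. Throughout I identify each $y \in Y$ with the linear functional $f_y \colon x \mapsto \langle x, y \rangle$ on $X$.

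First I would dispatch the easy inclusion. For fixed $y$, the map $f_y$ is linear by axiom 1 of Definition \ref{duality}, and because $|f_y(x)| = p_{\{y\}}(x)$ is precisely one of the seminorms generating the topology $\sigma(X,Y)$, the functional $f_y$ is $\sigma(X,Y)$-continuous; thus $y \in (X,\sigma(X,Y))'$. The assignment $y \mapsto f_y$ is injective by axiom 4 (if $\langle x, y \rangle = 0$ for every $x$, then $y = 0$), so $Y$ embeds as a linear subspace of the topological dual.

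The substantive direction is the reverse inclusion, which I would argue as follows. Let $f$ be an arbitrary $\sigma(X,Y)$-continuous linear functional. Since $\{x : |f(x)| < 1\}$ is a $\sigma(X,Y)$-neighborhood of the origin, it contains a basic neighborhood $\{ x : |\langle x, y_i \rangle| < \varepsilon, \ i = 1, \ldots, n \}$ for some finite set $y_1, \ldots, y_n \in Y$ and some $\varepsilon > 0$. A homogeneity (scaling) argument then upgrades this to the uniform bound $|f(x)| \le \frac{1}{\varepsilon} \max_{1 \le i \le n} |\langle x, y_i \rangle|$ valid for all $x \in X$; in particular $\bigcap_{i=1}^n \ker f_{y_i} \subseteq \ker f$. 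I would then invoke the standard linear-algebra lemma that a linear functional annihilating the common kernel of finitely many linear functionals is a linear combination of them, obtaining scalars $c_1, \ldots, c_n$ with $f = \sum_{i=1}^n c_i f_{y_i} = f_y$, where $y := \sum_{i=1}^n c_i y_i \in Y$. This exhibits $f$ as an element of $Y$ and completes the inclusion.

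The main obstacle is concentrated in this reverse inclusion, specifically in its two linked technical points: extracting the uniform estimate $|f(x)| \le C \max_i |\langle x, y_i \rangle|$ from continuity at the origin alone, and the kernel-inclusion lemma that converts that estimate into an explicit finite linear combination. Both are routine in the locally convex setting, but they carry essentially all the content of the theorem; by contrast the embedding of $Y$ and the symmetric statement $(Y,\sigma(Y,X))' = X$ are immediate.
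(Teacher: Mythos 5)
Your proposal is correct and follows the standard argument; the paper itself gives no proof of this theorem, quoting it directly from \cite[pg.~153]{AliBord}, and your route---embedding $Y$ via the generating seminorms $x \mapsto |\langle x, y\rangle|$, then using the scaling bound $|f(x)| \le \varepsilon^{-1}\max_{1\le i\le n} |\langle x, y_i\rangle|$ and the finite-kernel lemma to write any $\sigma(X,Y)$-continuous $f$ as $f_y$ with $y = \sum_{i=1}^n c_i y_i \in Y$---is precisely the classical proof found in that reference. Nothing is missing: the injectivity of $y \mapsto f_y$ from axiom~4 of Definition~\ref{duality} and the symmetry argument yielding $(Y,\sigma(Y,X))' = X$ are both handled correctly.
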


 The next theorem is concerned with
\begin{equation}\label{GLEQ}
 x'=f(t,x),~~ x(t_0) = x_0, \end{equation} where $f \in C[\mathbb{R}_+ \times E,
E]$, $E$ being a Banach space.
\begin{theorem}\label{GlobalEx} \cite[pg. 145]{LakLeela} Assume that $$\|f(t,x) \| \leq
g(t,\|x\|), ~~ (t,x) \in \mathbb{R}_+ \times E ,$$ where $ g \in
C[\mathbb{R}_+\times\mathbb{R}_+, \mathbb{R}_+]$, $g(t,u)$ is
nondecreasing in $u$ for each $t \in \mathbb{R}_+ $, and the maximal
solution $r(t,t_0,u_0)$ of the scalar differential equation
$$ u'=g(t,u),~~ u(t_0)=u_0 \geq 0 ,$$
exists on $[t_0, \infty).$ Suppose that f is smooth enough to assure
local existence of solutions to \eqref{GLEQ} for any $(t_0,x_0) \in
\mathbb{R}_+\times E.$  Then the largest interval of existence of
any solution $x(t,t_0,x_0)$ of \eqref{GLEQ} such that $\|x_0 \| \leq
u_0$ is $[t_0, \infty)$. If in addition $r(t,t_0,u_0)$ is bounded,
then $ \lim_{t \to \infty} x(t,t_0,x_0) =y \in E. $
\end{theorem}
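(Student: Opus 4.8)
The plan is to dominate the norm of an arbitrary solution by the maximal solution $r$ of the scalar comparison equation, to use completeness of $E$ to exclude finite-time blow-up, and to extract the asymptotic limit from the same domination. First I would fix a solution $x(t)=x(t,t_0,x_0)$ with $\|x_0\|\le u_0$ on its maximal interval of existence $[t_0,T)$ and set $m(t)=\|x(t)\|$. Since $x$ is differentiable and $f$ is continuous, the reverse triangle inequality $\|x(t+h)\|-\|x(t)\|\le\|x(t+h)-x(t)\|$ gives, for the upper right Dini derivative,
$$D^{+}m(t)\le\|x'(t)\|=\|f(t,x(t))\|\le g(t,m(t)).$$
Because $m(t_0)=\|x_0\|\le u_0$ and $g$ is nondecreasing in its second slot, the standard scalar comparison theorem for maximal solutions (e.g.\ \cite{LakLeela}) yields the a priori bound $\|x(t)\|=m(t)\le r(t,t_0,u_0)$ on $[t_0,T)$.

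Next I would show $T=\infty$ by contradiction. If $T<\infty$, then since $r$ exists and is continuous on all of $[t_0,\infty)$ it is in particular continuous at $T$. For $t_0\le t_1<t_2<T$, the integral form of \eqref{GLEQ} together with $\|f\|\le g$, the monotonicity of $g$, the bound $m\le r$, and the identity $r'(s)=g(s,r(s))$ give
$$\|x(t_2)-x(t_1)\|\le\int_{t_1}^{t_2}g(s,r(s))\,ds=r(t_2)-r(t_1).$$
Continuity of $r$ at $T$ forces the right-hand side to $0$ as $t_1,t_2\to T^{-}$, so $\{x(t)\}$ is Cauchy; completeness of $E$ furnishes $\lim_{t\to T^{-}}x(t)=:x_T\in E$. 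Setting $x(T)=x_T$ and invoking the assumed local existence at $(T,x_T)$ continues the solution past $T$, contradicting maximality. Hence every such solution exists on $[t_0,\infty)$.

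For the asymptotic assertion, assume $r(t,t_0,u_0)$ is bounded. As $g\ge 0$ forces $r'=g(\cdot,r)\ge 0$, the function $r$ is nondecreasing and bounded, so $\lim_{t\to\infty}r(t)$ exists and $r$ is Cauchy at infinity. The estimate $\|x(t_2)-x(t_1)\|\le r(t_2)-r(t_1)$, now valid for all $t_0\le t_1<t_2$, then shows $\{x(t)\}$ is Cauchy as $t\to\infty$, and completeness of $E$ again yields $\lim_{t\to\infty}x(t)=y\in E$.

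The main obstacle is the a priori comparison bound $\|x(t)\|\le r(t,t_0,u_0)$. The difficulty is that in a general Banach space the norm need not be differentiable, so one cannot simply differentiate $m(t)=\|x(t)\|$; instead the argument must be run through the upper right Dini derivative and the comparison theorem for maximal solutions of the scalar equation $u'=g(t,u)$. Once this bound is in hand, the finite-time extension and the convergence at infinity are routine consequences of the completeness of $E$ and of the continuity and monotonicity of $r$.
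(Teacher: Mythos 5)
Your proof is correct, and since the paper states Theorem \ref{GlobalEx} without proof (it is imported verbatim from \cite[pg.~145]{LakLeela}), the right benchmark is that source's argument, which yours reconstructs faithfully: the Dini-derivative estimate $D^{+}\|x(t)\|\le g(t,\|x(t)\|)$ combined with the scalar comparison theorem to get $\|x(t)\|\le r(t,t_0,u_0)$, the Cauchy estimate $\|x(t_2)-x(t_1)\|\le r(t_2)-r(t_1)$ (using monotonicity of $g$ exactly where it is needed, to pass from $g(s,\|x(s)\|)$ to $g(s,r(s))$), and completeness of $E$ both to continue past any putative finite maximal time and to extract the limit at infinity when $r$ is bounded and hence, being nondecreasing, convergent. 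There are no gaps; in particular you correctly avoid differentiating the norm directly by working with the upper right Dini derivative.
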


\begin{definition}\label{integral} \cite[ III.33]{NB2} Let $X$ be locally compact, $E$ a
Hausdorff
locally convex space, and $\mu$ a measure on the Borel sets of $X$.
For every $f \in C_c(X;E)$ we call the integral of $f$ with respect
to $\mu$, $\int f d\mu $, the element of ${E'}^\sharp$ where ${E'}$
is the continuous dual and ${E'}^\sharp$ is the algebraic dual
defined by

$$ \biggl < \int f d\mu, {z'} \biggr > = \int_X <f(x),{z'}> d\mu(x), ~~~~~~~~~~~~ for~ all~ {z'} \in  {E'}. $$
\end{definition}

\begin{theorem}\label{seminorm}\cite[III.37]{NB2} Let $X$ be as in
Definition \ref{integral}, and let $\mathcal{B}_X$ denote the Borel
sets on $X$. Suppose $f$ is a continuous mapping with compact
support of $(X, \mathcal{B}_X)$ into a Hausdorf locally convex space
$E$ and $q$ is a continuous semi-norm on $E$. Then for every measure
$\mu$ on $(X, \mathcal{B}_X)$ such that $\int f d\mu \in E,$ $$ q
\biggl( \int f d\mu \biggr) \leq \int(q\circ f) d|\mu |.
$$

\end{theorem}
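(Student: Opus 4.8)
The plan is to reduce this vector-valued estimate to a uniform family of scalar estimates, by testing $\int f\, d\mu$ against those continuous linear functionals that are dominated by $q$ and then recovering $q$ itself as the supremum of these functionals via Hahn--Banach. The key preliminary fact is the dual representation of a continuous seminorm: for every $v \in E$,
$$ q(v) = \sup\bigl\{\, |z'(v)| : z' \in E',\ |z'(w)| \leq q(w)\ \text{for all } w \in E \,\bigr\}. $$
Write $P_q$ for the polar set $\{ z' \in E' : |z'(w)| \leq q(w)\ \forall w \}$. The inequality $q(v) \geq |z'(v)|$ for $z' \in P_q$ is immediate, and the reverse is obtained by applying Hahn--Banach to the functional $\lambda v \mapsto \lambda q(v)$ defined on the line through $v$: it is dominated by $q$ there, so it extends to some $z' \in P_q$ with $z'(v) = q(v)$.

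Next I would apply this representation to $v = \int f\, d\mu$, which by hypothesis lies in $E$, so that $\langle \int f\, d\mu, z'\rangle = z'\bigl(\int f\, d\mu\bigr)$. Fix $z' \in P_q$. By Definition \ref{integral},
$$ \left\langle \int f\, d\mu,\, z' \right\rangle = \int_X \langle f(x), z'\rangle\, d\mu(x). $$
Since $f \in C_c(X;E)$ and $z'$ is continuous and linear, the scalar function $x \mapsto \langle f(x), z'\rangle$ belongs to $C_c(X)$ and is therefore $|\mu|$-integrable. The standard total-variation estimate for scalar integrals against a signed (or complex) measure gives
$$ \left| \int_X \langle f(x), z'\rangle\, d\mu(x) \right| \leq \int_X |\langle f(x), z'\rangle|\, d|\mu|(x). $$
Because $z' \in P_q$, one has $|\langle f(x), z'\rangle| \leq q(f(x))$ pointwise, and $q \circ f \in C_c(X)$ is continuous with compact support, hence $|\mu|$-integrable. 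Therefore, for \emph{every} $z' \in P_q$,
$$ \left| \left\langle \int f\, d\mu,\, z' \right\rangle \right| \leq \int_X q(f(x))\, d|\mu|(x) = \int (q \circ f)\, d|\mu|. $$
Taking the supremum over $z' \in P_q$ and invoking the representation of $q$ from the first step yields $q\bigl(\int f\, d\mu\bigr) \leq \int (q \circ f)\, d|\mu|$, which is the claim.

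The step I expect to be the main obstacle is the seminorm representation $q(v) = \sup_{z' \in P_q} |z'(v)|$, since this is exactly where Hahn--Banach and the local convexity of $E$ enter; one must be careful that $q$ is only a seminorm (its unit ``ball'' contains the kernel of $q$ and is unbounded), so the argument is run on the one-dimensional subspace spanned by $v$ rather than by normalizing. A pleasant feature is that no interchange of supremum and integral is needed: each scalar integral is bounded by the \emph{fixed} quantity $\int (q\circ f)\, d|\mu|$ before the supremum is taken. The remaining work is routine bookkeeping --- verifying that $\langle f(\cdot), z'\rangle$ and $q\circ f$ genuinely lie in $C_c(X)$ (so that both integrals are finite, $|\mu|$ being finite on compact sets) and that the pairing identity of Definition \ref{integral} applies with $\int f\, d\mu$ regarded as an element of $E$.
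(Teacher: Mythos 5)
Your proposal is correct, and in fact the paper contains no proof of this statement at all: Theorem \ref{seminorm} is quoted as a known result from Bourbaki \cite[III.37]{NB2}, and the argument you give --- the Hahn--Banach representation $q(v)=\sup\{|z'(v)|: z'\in E',\ |z'|\le q\}$, the pairing identity of Definition \ref{integral} applied to $\int f\,d\mu\in E$, and the scalar total-variation bound $\bigl|\int_X \langle f(x),z'\rangle\,d\mu(x)\bigr|\le\int_X q(f(x))\,d|\mu|(x)$ --- is essentially the standard proof underlying that citation. Your handling of the delicate points is also right: running Hahn--Banach on the line through $v$ (so the seminorm's possibly nontrivial kernel causes no trouble), checking $\langle f(\cdot),z'\rangle$ and $q\circ f$ lie in $C_c(X)$, and noting that no sup--integral interchange is needed since each functional is bounded by the fixed quantity $\int(q\circ f)\,d|\mu|$.
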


\begin{theorem}\label{int}\cite[III.37]{NB2} Let $X$ be as in
Definition \ref{integral}, let $E$ be a Hausdorf locally convex
space, and $f \in C_c(X;E)$. If $f(X)$ is contained in a complete
convex subset $A$ of $E$, then $\int f d\mu \in E$. \end{theorem}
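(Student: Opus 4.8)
The plan is to realize $\int f\,d\mu$ as an honest limit of Riemann sums that are trapped inside $A$, and then to invoke the completeness of $A$ to produce that limit inside $E$. Recall from Definition \ref{integral} that $\int f\,d\mu$ is, a priori, only the element of ${E'}^{\sharp}$ acting by $z'\mapsto \int_X \langle f(x),z'\rangle\,d\mu(x)$; the whole content of the theorem is that this linear functional lies in the canonical image of $E$ inside ${E'}^{\sharp}$. Since $f$ has compact support $K:=\operatorname{supp} f$ and $E'$ separates the points of $E$ (Hahn–Banach, $E$ being Hausdorff locally convex), it suffices to exhibit a single $e\in E$ with $\langle e,z'\rangle=\int_K\langle f,z'\rangle\,d\mu$ for every $z'\in E'$.

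First I would reduce to a positive finite measure. Writing the Jordan decomposition $\mu=\mu^{+}-\mu^{-}$, linearity of the defining functional gives $\int f\,d\mu=\int f\,d\mu^{+}-\int f\,d\mu^{-}$ in ${E'}^{\sharp}$, so it is enough to treat $\mu\ge 0$; and as a Radon measure $\mu$ is finite on the compact set $K$, say $m:=\mu(K)<\infty$. If $m=0$ the functional is $0=\langle 0,\cdot\rangle$ and we are done, so assume $m>0$.

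Next I would build the approximating net. Directing the tagged finite Borel partitions $\mathcal P=\{(E_i,x_i)\}$ of $K$ (with each $x_i\in E_i$) by refinement, form the Riemann sums $S_{\mathcal P}=\sum_i \mu(E_i)\,f(x_i)$. The normalized sum $m^{-1}S_{\mathcal P}=\sum_i \tfrac{\mu(E_i)}{m}\,f(x_i)$ is a convex combination of points of $f(K)\subseteq A$, hence lies in $A$ by convexity. For the Cauchy property, fix a continuous seminorm $p$ on $E$: since $f$ is continuous on the compact $K$ it is uniformly continuous into the uniform space $(E,p)$, so there is a finite Borel partition on whose cells $f$ oscillates by less than $\varepsilon$ in $p$; any two refinements of it then produce $S$-sums within $m\varepsilon$ of each other in $p$. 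Thus $(S_{\mathcal P})$, and with it $(m^{-1}S_{\mathcal P})$, is a Cauchy net in $E$.

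Finally, completeness of $A$ lets the Cauchy net $(m^{-1}S_{\mathcal P})$ converge to some $a\in A\subseteq E$, whence $S_{\mathcal P}\to m a=:e\in E$. Testing against any $z'\in E'$, continuity of $z'$ gives $\langle e,z'\rangle=\lim_{\mathcal P}\sum_i \mu(E_i)\langle f(x_i),z'\rangle=\int_K\langle f,z'\rangle\,d\mu$, the last equality being the ordinary convergence of Riemann sums of the continuous scalar function $x\mapsto\langle f(x),z'\rangle$. Since this holds for every $z'$, the functional $\int f\,d\mu$ coincides with the image of $e$, so $\int f\,d\mu\in E$. The main obstacle is exactly this final existence step: the Riemann net is only Cauchy, so absent a completeness hypothesis its limit would live merely in the completion of $E$; it is the completeness of $A$, working together with convexity (which confines the normalized sums to $A$), that forces the limit back into $A\subseteq E$. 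Everything else is the routine verification that the resulting $e$ represents the defining functional.
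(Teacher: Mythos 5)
The paper contains no proof of this statement to compare against: Theorem \ref{int} sits in the appendix as a background result quoted verbatim from Bourbaki \cite[III.37]{NB2}, and the authors simply cite it. So your proposal can only be judged on its own merits and against the classical argument, and on both counts it holds up: it is correct, and it is in essence the standard proof. Your reduction via Jordan decomposition is legitimate (and harmless in this paper's setting of finite signed Borel measures, where $\mu^{\pm}$ are finite on the compact support $K$); the normalization $m^{-1}S_{\mathcal{P}}$ is exactly the right device to keep the approximants inside the convex set $A$ (convexity alone only controls combinations with coefficients summing to $1$); the common-refinement estimate $p\bigl(S_{\mathcal{P}}-S_{\mathcal{Q}}\bigr)\le m\varepsilon$ for refinements of a partition on whose cells $f$ has $p$-oscillation below $\varepsilon$ is the correct Cauchy argument; and the final identification $\langle e,z'\rangle=\int_K\langle f,z'\rangle\,d\mu$ via scalar Riemann nets, together with the fact that $E'$ separates points, pins down the functional as the canonical image of $e$. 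The one stylistic divergence from Bourbaki is worth noting: Bourbaki approximates $f$ uniformly by functions of the form $\sum_i\varphi_i\,f(x_i)$, where $(\varphi_i)$ is a continuous partition of unity subordinate to a fine open cover of $K$, and integrates these exactly; this stays entirely within $C_c(X)$ and so works for Radon measures viewed purely as functionals, with no appeal to Borel set functions. Your tagged-Borel-partition Riemann net instead uses $\mu$ as a set function, which requires the small disjointification step you implicitly perform (turning a finite open cover into Borel cells) — perfectly fine here, since the measures in this paper are finite signed Borel measures, but the partition-of-unity route is the more canonical one in Bourbaki's generality. Your closing diagnosis is also the right one: completeness of $A$ is precisely what converts the Cauchy net, confined to $A$ by convexity, into an honest limit in $E$ rather than in a completion.
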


\begin{figure}[htbp] \label{Fig1}
 \resizebox{12cm}{!}{\includegraphics{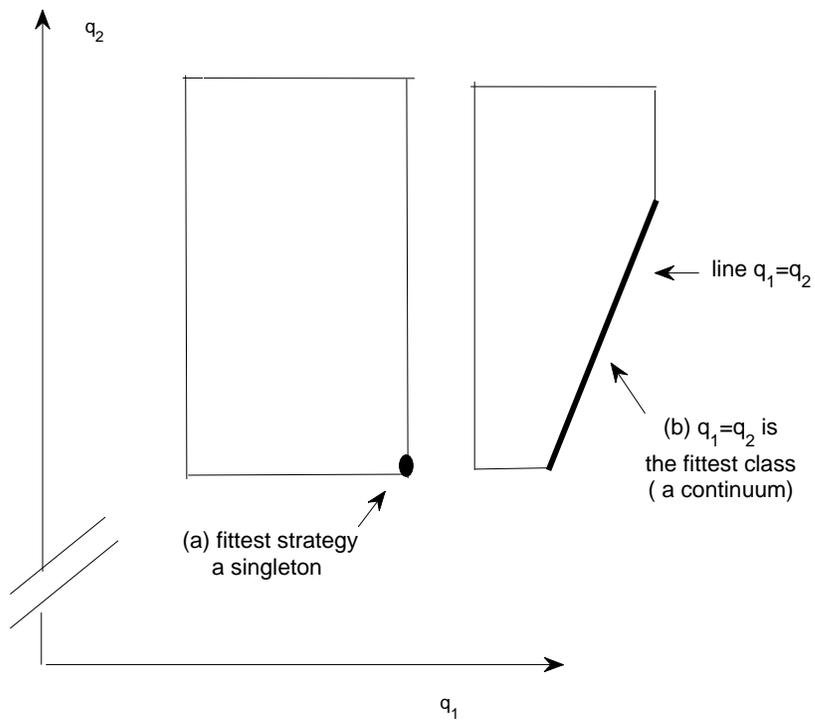}}
 \caption{Two examples of strategy spaces.}
\label{strategyspace}
\end{figure}

\end{document}